\newenvironment{enumeratei}{\begin{enumerate}[\upshape (i)]}{\end{enumerate}}
\numberwithin{equation}{section}
\theoremstyle{plain}
 \newtheorem{theorem}{Theorem}[section]
 \newtheorem{lemma}[theorem]{Lemma}
 \newtheorem{proposition}[theorem]{Proposition}
\theoremstyle{definition}
 \newtheorem{remark}[theorem]{Remark}
\newcommand \datum{\red{October 24, 2017,\quad 17:43}}
\newcommand \what [1] {\widehat #1}
\newcommand \nonca {N^{\textup{an}}}
\newcommand \noncn {N^\textup{num}}
\newcommand \qqp {\mathbb Q_{(p)}}
\newcommand  \pDb {D^{[\textup{o,}k]}}
\newcommand  \pfb {f^{[\textup{o,}k]}}
\newcommand  \pDa {D^{[\textup{e,}k]}}
\newcommand  \pfa {f^{[\textup{e,}k]}}
\newcommand  \ctwone {g_{\textup{cos}}^{(1)}}
\newcommand  \csg {g_{\textup{cos}}^{(k)}}
\newcommand \sfan [2] {F_{\textup{sl}}^{#1}(#2)}
\newcommand \dfan [2] {F_{\textup{cd}}^{#1}(#2)}
\newcommand \alg [1] {\mathbf{#1}}
\newcommand \zz {\mathbb Z}
\newcommand \qq {\mathbb Q}
\newcommand \rr {\mathbb R}
\newcommand \degx [2] {\textup{deg}_{#1}(#2)}
\newcommand \notmid {\mathrel{
\not\mathord{\kern -0.5pt \vert}}}
\newcommand \snotmid {\mathrel{
\not\mathord{\kern 2.2pt \vert}}}
\newcommand \binomial [2] {{{#1}\choose {#2}}}
\newcommand \tuple [1] {\langle #1\rangle}
\newcommand \pair [2] {\tuple{#1,#2}}
\renewcommand\phi{\varphi}
\newcommand\red[1]{{\textcolor{red}{#1}}}
\newcommand \tbf [1] {\textbf{#1}} 
\newcommand \set[1] {\{#1\}}
\renewcommand\phi{\varphi}
\renewcommand\epsilon{{\varepsilon}}
\newcommand\nothing [1] {}
\newcommand \fun[1]{{^\star\kern-2pt{#1}} }
\newcommand \ffun[1]{{^\star\kern-1pt{#1}} } 
\newcommand \varfun[1]{{^\star\kern-1pt{(#1)}} }
\newcommand \bvarfun[1]{{^\star\kern-1pt{\bigl(#1\bigr)}} }
\long\def\nothing #1  {}
\long\def\needsoon #1  {}
\begin{document}
\title[Geometric constructibility of polygons]
{Geometric constructibility of
polygons lying on a circular arc}

\author[D. Ahmed]{Delbrin Ahmed}
\email{delbrin@math.u-szeged.hu; delbrinhahmed@hotmail.co.uk}
\address{University of Szeged, Bolyai Institute\, and the University of Duhok}

\author[G.\ Cz\'edli]{G\'abor Cz\'edli}
\email{czedli@math.u-szeged.hu}
\urladdr{http://www.math.u-szeged.hu/\textasciitilde{}czedli/}

\address{University of Szeged, Bolyai Institute. 
Szeged, Aradi v\'ertan\'uk tere 1, HUNGARY 6720}
\author[E.\ K.\ Horv\'ath]{Eszter K.\ Horv\'ath}
\email{horeszt@math.u-szeged.hu}\urladdr{http://www.math.u-szeged.hu/\textasciitilde{}horvath/}\address{University of Szeged, Bolyai Institute. 
Szeged, Aradi v\'ertan\'uk tere 1, HUNGARY 6720}

\thanks{This research of the second and third authors is
supported by
the Hungarian Research Grant KH 126581}


\keywords{Geometric constructibility, circular arc, inscribed polygon, cyclic polygon,  compass and ruler, straightedge and compass}

\date{\datum\kern3cm\\
\phantom{nk} 2000 \emph{Mathematics Subject Classification}. Primary 51M04, secondary 12D05}

\begin{abstract} For a positive integer $n$, an \emph{$n$-sided polygon lying on a circular arc} or, shortly, an \emph{$n$-fan} is a sequence of $n+1$ points on a circle going counterclockwise such that the ``total rotation'' $\delta$ from the first point to the last one is at most $2\pi$. We prove that for $n\geq 3$, the $n$-fan cannot be constructed with straightedge and compass in general from its \emph{central angle} $\delta$ and its
\emph{central distances}, which are 
the distances of the edges from the center of the circle. 
Also, we prove that for each \emph{fixed} $\delta$ in the interval $(0, 2\pi]$ and
for every $n\geq 5$, there exists a \emph{concrete} $n$-fan with central angle $\delta$ that
is not constructible from its central distances and $\delta$.
The present paper generalizes some earlier results  published by the second author and \'A.~Kunos 
on the particular cases $\delta=2\pi$ and $\delta=\pi$.
\end{abstract}


\maketitle

\section{Introduction and our results}\label{sectionintro}
\subsection*{A short historical survey}
With the exception of squaring the circle, not much research interest was paid to geometric constructibility problems
for one and a half centuries after the 
Gauss--Wantzel Theorem in  \cite{wantzel}, which  completely described the constructible regular $n$-gons.  
This can be well explained by the fact that 
most of the ancient constructibility problems as well as constructing triangles from various given data are too elementary and, furthermore, nowadays it does not require too much skill to solve them with the help of computer algebra in few minutes. This is exemplified by the textbooks Cz\'edli~\cite{czgproblembook} and Cz\'edli and Szendrei~\cite{czgsza}, where more than a hundred constructibility problems are solved. 

It was Schreiber~\cite{schreiber} who revitalized the research of geometric constructibility by an interesting non-trivial problem, the constructibility of cyclic (also known as inscribed) polygons from their side lengths. Furthermore, he pointed out that this problem requires a variety of interesting tools from algebra and geometry. The first complete proof of his theorem on the non-constructibility of cyclic  $n$-gons from their side lengths for every $n\geq 5$ used some involved tools even from analysis; see Cz\'edli and Kunos~\cite{czgkunos}.

\begin{figure}[htb]
\centerline
{\includegraphics[scale=0.9]{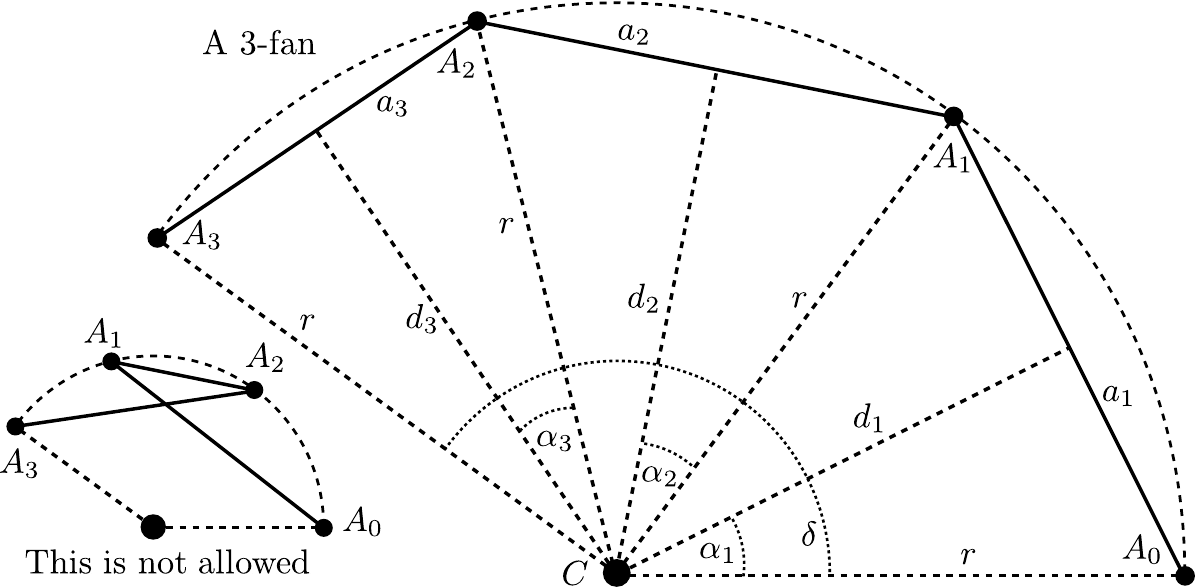}}
\caption{A $3$-fan, that is, a 3-sided polygon lying on a circular arc,  and a ``zigzag polygon'', which is not investigated in the paper
\label{fig1}}
\end{figure}

\subsection*{Polygons on a circular arc}
Let $n\in\mathbb N=\set{1,2,3,\dots}$.
By an \emph{$n$-sided polygon lying on a circular arc} or, shortly, by an \emph{$n$-fan} we mean a planar polygon $\alg A=\tuple{A_0,A_1,\dots,A_n}\in(\mathbb R^2)^{n+1}$ such that the \emph{vertices} $A_0,\dots, A_n$, in this order, lie on the same circular arc, see on the right of Figure~\ref{fig1}. The short name ``$n$-fan'' is explained by the similarity with a not fully open hand fan. 
Some important real numbers that determine an $n$-fan are also given in Figure~\ref{fig1}; the  \emph{central distances} $d_1,\dots, d_n$ of the sides from the center $C$ of the circular arc, the \emph{central angle} $\delta\in(0,2\pi]=\set{r\in \rr: 0<r\leq 2\pi}$, and the \emph{radius} $r$ are worth separate mentioning here.
Like in the earlier papers Schreiber~\cite{schreiber}, Cz\'edli and Kunos~\cite{czgkunos}, and Cz\'edli~\cite{czgthalesian}, an easy argument based on properties of continuous real functions shows that the ordered tuple  $\tuple{\delta; d_1,\dots,d_n}$ determines the $n$-fan up to isometry, provided that $0<\delta<2\pi$ or $n\geq 3$.  We denote by $\dfan n{\delta; d_1,\dots,d_n}$ the $n$-fan determined by this tuple; the subscript comes from ``central distances''. For the central angle $\delta$, we always assume that $0<\delta\leq 2\pi$. Furthermore, we always assume that our $n$-fans are \emph{convex} in the sense that the angle $\angle(A_{i-1}A_iA_{i+1})$ at $A_i$ contains $C$ for 
$i\in\set{1,\dots,n-1}$. If  $\delta=2\pi$, then we assume also that the angle $\angle(A_{n-1}A_0A_{1})$ at $A_0=A_n$ contains $C$. Convexity means that 
``zigzag polygons'' like the small one on the left of Figure~\ref{fig1} are not allowed. 
With the notation  $\mathbb R^+=\set{x\in\mathbb R: x>0}$, note that there are $(n+1)$-tuples
$\tuple{\delta,d_1,\dots,d_n}\in (0,2\pi]\times (\mathbb R^+)^n$ for which $\dfan n{\delta,d_1,\dots,d_n}$ does not exist. However, similarly to Cz\'edli and Kunos~\cite{czgkunos} and Cz\'edli~\cite{czgthalesian}, it follows from continuity that, for $n\in\mathbb N$, 
\begin{equation}
\parbox{9.4cm}{
if $n\geq 3$ or $\delta<2\pi$, and  all the ratios $d_i/d_j$ are sufficiently close to 1, then $\dfan{n}{\delta,d_1,\dots,d_n}$ exists and it is unique.}
\label{parboxscGjmB}
\end{equation}

\subsection*{Constructibility}
In this paper, \emph{constructibility} is always
understood as the classical geometric constructibility with straightedge and compass.
(We prefer the word ``straightedge'' to ``ruler'', because it describes the permitted usage better.)
Due to the usual coordinate system of the plane, we can assume that a \emph{constructibility problem} is always a task of constructing a real number $t$ from a sequence $\tuple{t_1,\dots, t_m}$ of real numbers. Geometrically, this means that we are given the points $\pair 0 0$, $\pair {t_1}0$, \dots, $\pair {t_m}0$ in the plane and we want to construct the point $\pair t 0$. Angles are also given by real numbers.
Whenever we say that the central angle $\delta$ is given, this means that the real number  $p:=\cos(\delta/2)$ is given. From the perspective of constructibility, any other usual way of giving $\delta$ is equivalent to giving $p$, that is the point $\pair p0$. The advantage of using $p=\cos(\delta/2)\in[-1,1)$ over, say, $\cos\delta$ is that $p$ uniquely determines $\delta\in (0,2\pi]$.
As opposed to the constructibility of a concrete point from other concrete points, the concept of \emph{constructibility in general} is more involved; the reader may want to but need not see Cz\'edli~\cite{czgthalesian} and Cz\'edli and Kunos~\cite{czgkunos}  for a rigorous definition. The reader of \emph{this} paper may safely assume that ``constructible in general'' means ``constructible for all meaningful data''.

\subsection*{Our results}
Our first target is to decide whether the $n$-fan $\dfan{n}{\delta,d_1,\dots,d_n}$ 
can be constructed from $\tuple{\delta,d_1,\dots,d_n}$ 
in general. 
We are going to prove the following.

\begin{theorem}\label{thmmain}
The $n$-fan  $\dfan{n}{\delta,d_1,\dots,d_n}$ is geometrically constructible in general from $\tuple{\cos(\delta/2),d_1,\dots,d_n}$ if and only if $n\in\set{1,2}$. Furthermore, if $n\geq 3$, then there exist \emph{rational} numbers $p=\cos(\delta/2)$, $d_1$, \dots, $d_n$ such that 
 $|\set{d_1,\dots,d_n}|\leq 2$ holds and the $n$-fan 
$\dfan{n}{\delta,d_1,\dots,d_n}$ exists, but this $n$-fan cannot be  constructed from $\tuple{p,d_1,\dots,d_n}$.
\end{theorem}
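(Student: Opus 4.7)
The plan separates the two directions of the theorem. For $n\in\{1,2\}$ the $n$-fan is constructible via quadratic extensions over $\qq(p,d_1,\dots,d_n)$. Concretely, for $n=1$ one has $r\lvert p\rvert=d_1$, and for $n=2$ introducing the half central angles $\alpha_1,\alpha_2$ with $\alpha_1+\alpha_2=\delta/2$ and $d_i=r\cos\alpha_i$ leads through the expansion $\cos(\delta/2-\alpha_1)=p\cos\alpha_1+\sqrt{1-p^2}\sin\alpha_1$ to $d_2-pd_1=\sqrt{1-p^2}\cdot r\sin\alpha_1$ and hence
\[
r^2=d_1^2+\frac{(d_2-pd_1)^2}{1-p^2},
\]
a constructible expression; the fan is then erected from $r$ and the given data by elementary means.

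For $n\geq 3$ I would adopt the following template for producing non-constructible examples. Partition the $n$ sides into two groups of sizes $k$ and $n-k$ with central distances $a$ and $b$, and let $\alpha,\beta$ denote the common half central angles within each group. Then $k\alpha+(n-k)\beta=\delta/2$, together with $a=r\cos\alpha$, $b=r\cos\beta$ and $\cos\alpha/\cos\beta=a/b$. Eliminating $\alpha$ and squaring once to clear $\sqrt{1-p^2}$ yields a univariate polynomial equation $P(c)=0$ with $P\in\qq(p,a,b)[c]$ and $c=\cos\beta$. The task reduces to choosing rational $p,a,b$ so that $P$ has a root $c$ generating a field extension of $\qq$ whose degree is not a power of $2$, while this root also corresponds to a geometrically admissible fan. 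For $n=3$ the cleanest instance is $k=3$, i.e.\ all $d_i$ equal to some rational $d$: then $3\alpha=\delta/2$ and the triple-angle identity makes $t:=\cos\alpha$ a root of $4t^3-3t-p=0$; the choice $p=1/2$ yields $8t^3-6t-1=0$, which has no rational root and is therefore irreducible over $\qq$, so $r=d/t$ has degree $3$ over $\qq$ and is non-constructible. For $n=4$ the all-equal recipe reduces to repeated bisection, so I would instead take $k=1$, $n-k=3$, $p=0$ (so $\delta=\pi$), and $a=1,b=2$; elimination yields a sextic in $c$ which, under $y=c^2$, becomes $64y^3-96y^2+37y-4=0$, irreducible over $\qq$ by the rational-root test, forcing $c$ to be non-constructible. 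For each $n\geq 5$ an analogous recipe should work: the all-equal case with $p$ chosen so that the Chebyshev equation $T_n(t)-p=0$ is irreducible when $n$ is odd, and a suitable two-group decomposition when $n$ is even.

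The principal obstacle is carrying out the last step uniformly for every $n\geq 3$: proving irreducibility of the one-variable polynomial over $\qq$ and simultaneously certifying that the chosen root is geometrically admissible (that it really corresponds to a convex fan on a circular arc of central angle $\delta$). For small $n$ this is hands-on algebra combined with continuity arguments in the spirit of~\eqref{parboxscGjmB}, but for general $n$ it likely requires a systematic irreducibility tool such as reduction modulo a well-chosen prime, or an appeal to known irreducibility results for Chebyshev-like polynomials $T_n(x)-p$.
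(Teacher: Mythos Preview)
Your treatment of $n\in\{1,2\}$ is correct and matches the paper's Proposition~\ref{propositionTcDDgNgl} in spirit (the paper normalizes $d_1=1$ and solves for $u=1/r$ via \eqref{eqdibvzhbSnBtPs}, which is equivalent to your closed form for $r^2$). Your concrete examples for $n=3$ (the nonagon via $p=1/2$, $d_1=d_2=d_3$) and $n=4$ (the cubic $64y^3-96y^2+37y-4$ at $p=0$, irreducible already mod $5$) are also valid; the paper in fact cites the nonagon example as an easy aside but formally handles $n=3$ through a different degree-$6$ polynomial in Proposition~\ref{propqwshrmd}.

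The obstacle you name for $n\geq 5$ is real, and the paper's way around it is worth comparing with your plan. Rather than fixing rational parameters and proving irreducibility case by case, the paper takes the distinguished distance $c$ (the last one for even $n\geq 4$, the last two for odd $n\geq 5$) to be \emph{transcendental} over $\qqp$. The elimination polynomial $\pfa_x(y)$ (respectively $\pfb_x(y)$) is then only \emph{quadratic} in $y=c$ (respectively $y=c^2$), with discriminant a constant times $(p^2-1)x^2\bigl(\csg(x)^2-1\bigr)$; see \eqref{eqdcsRmnt} and \eqref{eqdcsRsjhWms}. Because $p^2-1<0$ and $\csg$ has positive leading coefficient, this discriminant tends to $-\infty$ as $x\to\infty$, so it cannot be the square of any polynomial in $\qqp[x]$; Lemma~\ref{lemmaquadrpol} then gives irreducibility over $\qqp[x]$, hence over $\qqp(c)$, and the $x$-degree $2k$ (with $k=n-1$ or $k=n-2$ odd and $\geq 3$) is never a power of $2$. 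The passage from transcendental $c$ back to some rational $c$ is supplied by the Rational Parameter Theorem of Cz\'edli and Kunos. Your Chebyshev idea for odd $n$ can be rescued in the same spirit, since $T_n(x)-p$ is linear in $p$ and hence irreducible over $\qq(p)$ for transcendental $p$; but you would still need that Rational Parameter Theorem to land on rational data, and a separate two-group device for even $n$, so the paper's uniform quadratic-in-$c$ trick is what actually closes the gap.
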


For many values of $n$, the  inequality $|\set{d_1,\dots,d_n}|\leq 2$ above can easily be strengthened to   the equality $|\set{d_1,\dots,d_n}|=1$. For example, for $n=3$ and $\delta=2\pi/3$, where $p=\cos(\delta/2)=1/2$, even the $3$-fan   $\dfan3{2\pi/3,1,1,1}$ cannot be constructed; this follows trivially from the Gauss--Wantzel Theorem, \cite{wantzel}, from which we know that the regular  nonagon (also known as $9$-gon) cannot be constructed. Note that this easy argument is not applicable if $n$ is a power of 2.
Note also that the constructibility from \emph{rational} parameters is equivalent to the constructibility from $\tuple{1}$, that is, from the points $\pair 0 0$ and $\pair 1 0$.

In view of earlier results where $\delta$ was fixed as $2\pi$ or $\pi$, it is reasonable to consider the problem also for the case where $\delta$ is \emph{fixed} and only the side lengths $d_1,\dots,d_n$ are ``general''.  In particular, if $\delta$ is a constructible angle like $\pi$, $\pi/3$ or $\pi/2$, then we can consider it only information rather than a part of the data. As a preparation for our second theorem, we introduce the following notation for $\delta\in (0,2\pi]$ and $m\in \mathbb N$:
\begin{align*}
\noncn(\delta)&:=\{n\in \mathbb N: \text{there exist }d_1,\dots,d_n\in \rr^+\text{ such that }\dfan{n}{\delta,d_1,\dots,d_n}\cr
&\phantom{xxxi}\text{also exists and it is uniquely determined, but it is}\cr
&\phantom{xxxi}\text{not constructible from }\tuple{\cos(\delta/2),d_1,\dots,d_n}\},\quad \text{ and}\cr
\nonca(m)&:=\{\delta\in (0,2\pi]: m\in \noncn(\delta)\}.
\end{align*}
The superscripts above come for ``numbers'' and ``angles'', respectively, while ``$N$'' comes from the prefix ``non'' in ``non-constructible''.
As usual, $(0,2\pi)$ stands for the open interval $\set{r\in\rr: 0<r<2\pi}$ of real numbers.
Now, we are in the position to formulate our second statement.

\begin{theorem}\label{thmvltcorolyclS} The following six assertions hold. 
\begin{enumeratei}
\item\label{thmvltcorolyclSa} $\noncn(2\pi)=\set{3,5,6,7,8,\dots}$. In particular, $2\pi\in\nonca(3)$. 
\item\label{thmvltcorolyclSb} $\noncn(\pi)=\set{4,5,6,7,\dots}$. In particular, $\pi\notin\nonca(3)$. 
\item\label{thmvltcorolyclSc} For every $\delta\in(0,2\pi]$ such that $\cos(\delta/2)$ is transcendental, we have that $\noncn(\delta)=\set{3,4,5,6,\dots}$ and, in particular, $\delta\in \nonca(3)$.
\item\label{thmvltcorolyclSd} For every $\delta\in (0,2\pi)$,  $\,\,\set{4,5,6,\dots}\subseteq\noncn(\delta)\subseteq\set{3,4,5,6,\dots}$.
\item\label{thmvltcorolyclSe} For $5\leq n\in\mathbb N$, $\nonca(n)=(0,2\pi]$ but $\nonca(4)=(0,2\pi)$. 
\item\label{thmvltcorolyclSf}For $k,m\in \mathbb N$, let $\displaystyle{A_k^{(m)}=\bigl\{\frac {i^{1/m}}j: 1\leq j \leq k\text{ and } 1\leq i< j^m\bigr\}}$. 
With this notation, whenever $|\kern-0.5pt{\cos(\delta/2)}|$ belongs to $A_{1000}^{(1)}\cup A_{100}^{(2)}$, then $\delta\in\nonca(3)$.
\end{enumeratei}
\end{theorem}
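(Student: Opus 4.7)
My plan is to start from a common algebraic setup. For an $n$-fan with central angle $\delta$ and central distances $d_1,\dots,d_n$, the half-angles $\beta_i$ satisfy $\cos\beta_i=d_i/r$ and $\beta_1+\cdots+\beta_n=\delta/2$, so expanding $\cos(\beta_1+\cdots+\beta_n)=p$, where $p=\cos(\delta/2)$, via the angle-addition formula and rationalising the radicals $\sin\beta_i=\sqrt{r^2-d_i^2}/r$ by repeated squaring yields a polynomial relation $\Phi_n(r^2;d_1,\dots,d_n,p)=0$ with coefficients in $\mathbb{Q}[d_1,\dots,d_n,p]$. All six assertions then reduce to controlling, for well-chosen rational tuples $(d_1,\dots,d_n)$, the Galois behaviour of $\Phi_n$ over $\mathbb{Q}(p)$; concretely, I want to exhibit an irreducible factor of $\Phi_n$ whose degree is not a power of two, which immediately forces non-constructibility of the radius.

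Parts (i) and (ii) are the main results of Cz\'edli and Kunos~\cite{czgkunos} and Cz\'edli~\cite{czgthalesian}, so I would quote them directly. For part (iii), with $p$ transcendental, I split on whether $n$ is a power of two. If $n\ne 2^k$, I take the regular fan $d_1=\cdots=d_n=1$, so $y:=1/r$ satisfies the Chebyshev equation $T_n(y)=p$; since $T_n(y)-p$ is linear in $p$ with unit coefficient, Gauss's lemma gives its irreducibility in $\mathbb{Q}(p)[y]$, and the degree $n$ has an odd prime factor. If $n=2^k$ the regular fan is itself constructible (because $T_{2^k}(y)-p$ is solvable by iterated quadratics), so I perturb to $d_1=\cdots=d_{n-1}=1$, $d_n=2$; for $n=4$ a direct computation reduces the radical-cleared equation to the cubic $16u^3-(24+16p)u^2+(12p+13)u+(p^2-1)=0$ in $u=1/r^2$, whose irreducibility in $\mathbb{Q}[u,p]$ I verify by treating it as a quadratic in $p$ and checking that the discriminant $256u^4-448u^3+240u^2-52u+4$ is not a square in $\mathbb{Q}[u]$. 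Analogous perturbations handle larger powers of two.

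For part (iv) the trivial inclusion is immediate, the transcendental case follows from (iii), and the algebraic case $p\notin\{0,-1\}$ requires showing that the perturbed cubic above (or its higher-$n$ analogue) retains an odd-degree irreducible factor under every such specialisation of $p$. Part (v) then follows by combining (i) with (iv). For part (vi), each $c\in A_{1000}^{(1)}\cup A_{100}^{(2)}$ is either a rational with small denominator or $\sqrt{i}/j$ with small numerator and denominator; for each such $c$ I would exhibit an explicit rational triple $(d_1,d_2,d_3)$ for which the cubic in $1/r^2$ attached to the $3$-fan (or, in the quadratic case, the sextic obtained after clearing $\sqrt{i}$) has no root in the constructible closure of $\mathbb{Q}(c)$, reducing the claim to a finite computer algebra sweep. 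The main obstacle overall is the algebraic case of (iv): one must ensure that a single explicit polynomial identity yields a non-$2$-power-degree extension at \emph{every} algebraic $p\in(-1,1)\setminus\{0\}$, which seems to require a delicate curve-wise factorisation analysis rather than a slick genericity argument.
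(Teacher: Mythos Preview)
Your handling of parts \eqref{thmvltcorolyclSa}, \eqref{thmvltcorolyclSb}, \eqref{thmvltcorolyclSe}, and \eqref{thmvltcorolyclSf} is essentially what the paper does: \eqref{thmvltcorolyclSa} and \eqref{thmvltcorolyclSb} are quoted from \cite{czgkunos} and \cite{czgthalesian}, \eqref{thmvltcorolyclSe} is deduced from \eqref{thmvltcorolyclSa} and \eqref{thmvltcorolyclSd}, and \eqref{thmvltcorolyclSf} is a finite computer sweep over specialisations of a fixed degree-$6$ polynomial in $1/r^2$.

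For part \eqref{thmvltcorolyclSc} your Chebyshev argument for non-powers of two is a nice alternative, but ``analogous perturbations handle larger powers of two'' is not a proof; the paper instead gives a \emph{uniform} treatment of all even $n\ge 4$ and all odd $n\ge 5$ that does not case-split on the $2$-adic valuation of $n$.

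The genuine gap is part \eqref{thmvltcorolyclSd}, and you have correctly located it without resolving it. Your strategy fixes the perturbation data $(d_1,\dots,d_n)$ once and for all and then tries to control the factorisation of $\Phi_n$ at \emph{every} algebraic $p\in(-1,1)\setminus\{0\}$; as you say, this would require a fibre-by-fibre analysis with no evident uniform mechanism. The paper sidesteps this entirely by reversing the roles of the parameters: for each fixed $\delta$ (so $p$ is fixed, algebraic or not), one lets the last central distance $c$ be transcendental over $\mathbb{Q}(p)$. The resulting polynomial is then \emph{quadratic in $c$} (or in $c^2$, in the odd case) with coefficients in $\mathbb{Q}(p)[x]$, and its irreducibility over $\mathbb{Q}(p)[x]$ reduces to showing that the discriminant, which equals a positive multiple of $(p^2-1)x^2\bigl(g_{\cos}^{(k)}(x)^2-1\bigr)$, is not a square in $\mathbb{Q}(p)[x]$. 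Since $p^2-1<0$ and $g_{\cos}^{(k)}$ has positive leading coefficient, this discriminant tends to $-\infty$ as $x\to\infty$ in $\mathbb{Q}(p)$, so it cannot be a square; this argument is completely uniform in $p$. The Rational Parameter Theorem of \cite{czgkunos} then replaces the transcendental $c$ by a rational one. In short, the missing idea is: do not specialise $p$ and hope the polynomial stays irreducible; instead keep $p$ arbitrary, make the \emph{perturbation} transcendental, and exploit that the dependence on that perturbation is only quadratic.
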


\begin{remark}\label{remarkdhgzVbMq}
Note that if $\delta\in(0,2\pi)$ and $n\in \mathbb N\setminus\noncn(\delta)$, then for all $n$-tuples $\tuple{d_1,\dots,d_n}\in(\rr^+)^n$ such that
the $n$-fan $\dfan{n}{\delta,d_1,\dots,d_n}$  exists, this $n$-fan is constructible from $\tuple{\cos(\delta/2),d_1,\dots,d_n}$. However, this is not true for $\delta=2\pi$ since  $\dfan1{2\pi,d_1}$ and  $\dfan2{2\pi,d_1,d_2}$ do not make sense; the first is not determined uniquely while the second does not exist. 
\end{remark}

It is a surprising gap in \ref{thmvltcorolyclS}\eqref{thmvltcorolyclSa} that   $4$ does not belong to $\noncn(2\pi)$. The redundancy in the theorem focuses our attention to $\nonca(3)$. However,  we do not have a satisfactory description of $\nonca(3)$.
Note  that it follows from \ref{thmvltcorolyclS}\eqref{thmvltcorolyclSf} that 
\[\bigl\{\frac{j\pi}4: 4\neq j\in\set{1,2,\dots,7} \bigr\} \cup \bigl\{\frac{k\pi}6: 6\neq k\in\set{1,2,\dots,11} \bigr\} \subseteq \nonca(3).
\]
We could let $j$ and $k$ run up to 8 and 12, respectively, but the inclusion above for $j=8$ and $k=12$ follows from \ref{thmvltcorolyclS}\eqref{thmvltcorolyclSa}, not from \ref{thmvltcorolyclS}\eqref{thmvltcorolyclSf}.

The $n$-fan determined by its central angle $\delta$ and its \emph{side lengths} $a_1,\dots,a_n$, see Figure~\ref{fig1}, will be denoted by $\sfan{n}{\delta,a_1,\dots, a_n}$; the subscript comes from ``side lengths''. Due to the Limit Theorem from Cz\'edli and Kunos~\cite{czgkunos}, the constructibility problem for $\sfan{n}{\delta,a_1,\dots, a_n}$ is easier than that for  $\dfan{n}{\delta,d_1,\dots, d_n}$. This fact and space considerations explain that the present paper contains only the following result on side lengths.

\begin{proposition}\label{propsLengths} For $n\in\mathbb N$, the $n$-fan 
$\sfan{n}{\delta,a_1,\dots, a_n}$ is constructible in general from $\tuple{\delta,a_1,\dots, a_n}\in (0,2\pi)\times(\rr^+)^n$  if and only if $n\leq 2$.
\end{proposition}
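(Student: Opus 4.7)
For the ``if'' direction ($n \in \{1, 2\}$) I plan to give a direct construction. When $n = 1$ the radius is simply $r = a_1/(2\sin(\delta/2))$, and since $\sin(\delta/2) = \sqrt{1 - \cos^2(\delta/2)}$ is constructible from $\cos(\delta/2)$, so is $r$; the fan (center, circle, arc of angle $\delta$) then follows. When $n = 2$ the inscribed-angle theorem yields $\angle A_0A_1A_2 = \pi - \delta/2$, so the law of cosines in the triangle $A_0A_1A_2$ gives
\[
|A_0A_2|^2 = a_1^2 + a_2^2 + 2a_1 a_2 \cos(\delta/2),
\]
which is constructible; then $r = |A_0A_2|/(2\sin(\delta/2))$ is constructible and the fan is drawn as before.

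For the ``only if'' direction I must exhibit, for each $n \geq 3$, concrete rational data yielding a non-constructible fan. For $n = 3$ I plan to take $\delta = 2\pi/3$ and $a_1 = a_2 = a_3 = 1$: uniqueness forces all three central subarcs to equal $2\pi/9$, so $r = 1/(2\sin(\pi/9))$. The Gauss--Wantzel Theorem (equivalently, the irreducibility over $\mathbb{Q}$ of the cubic $8x^3 - 6x + 1$, one of whose roots is $\cos(2\pi/9)$) forces $\sin(\pi/9)$, and hence $r$, to be non-constructible. For $n = 4$ I intend to use $\delta = \pi$, $a_1 = a_2 = a_3 = 1$, $a_4 = 2$; by symmetry of the first three sides, letting $u = \beta/2$ denote the common half-arc-angle, the fan condition reduces to $\cos(3u) = 2\sin u$ with a unique solution in $(0,\pi/6)$. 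Squaring and substituting $y = \cos^2 u$ produces the cubic $16y^3 - 24y^2 + 13y - 4 = 0$, which a rational-root test (candidates $p/q$ with $p \mid 4$ and $q \mid 16$) shows to be irreducible over $\mathbb{Q}$; hence $y$ has degree $3$, and $r = 1/(2\sqrt{1-y})$ is not constructible.

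For $n \geq 5$ the analogous Chebyshev-style argument would produce polynomials of degree $2(n-1)$ in $\cos u$ whose irreducibility or Galois-theoretic non-constructibility is tedious to verify uniformly. Instead, I plan to invoke the Limit Theorem of Cz\'edli and Kunos~\cite{czgkunos} in order to transfer the non-constructibility of the cyclic $n$-gon from side lengths (established in~\cite{czgkunos} for $n \geq 5$) to the existence of some $\delta \in (0, 2\pi)$ for which $\sfan{n}{\delta, a_1, \dots, a_n}$ is non-constructible; this is exactly the sense in which the side-length problem is ``easier'' than the central-distance one, as noted in the introduction. The main obstacle of the whole proof is thereby isolated to the case $n = 4$, for which cyclic $4$-gons \emph{are} constructible from side lengths so the Limit Theorem yields nothing and one has to supply the separate concrete example above.
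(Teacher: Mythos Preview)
Your proof is correct, but your ``only if'' direction is organized differently from the paper's and does more work than necessary. The paper disposes of all $n\geq 3$ in one stroke: it exhibits a single non-constructible $3$-fan (either your nonagon example $\sfan{3}{2\pi/3,1,1,1}$ or $\sfan{3}{\pi,1,2,3}$ from~\cite{czgthalesian}) and then invokes the Limit Theorem of~\cite{czgkunos} in the \emph{side-length} direction---if $\sfan{n}{\delta,a_1,\dots,a_n}$ were constructible for every $n$-tuple, then letting $a_4,\dots,a_n\to 0$ would make the limiting $3$-fan constructible as well. You instead treat $n=3$ and $n=4$ by explicit cubics and reserve the Limit Theorem for $n\geq 5$, applying it in the \emph{angle} direction (pushing $\delta\to 2\pi$ from the cyclic-polygon results of~\cite{czgkunos}); this is valid and parallels the paper's second proof of Proposition~\ref{propqwshrmd}, but your separate $n=4$ computation and the detour through cyclic $n$-gons for $n\geq 5$ become redundant once one notices that the side-degeneration form of the Limit Theorem already propagates the $n=3$ obstruction upward.

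On the ``if'' side, your treatment of $n=2$ via the inscribed-angle theorem and the law of cosines is a pleasant geometric shortcut; the paper instead sets up the trigonometric equation $\cos(\delta/2)=\cos\alpha_1\cos\alpha_2-\sin\alpha_1\sin\alpha_2$ directly, squares, and observes that $u^2=1/(4r^2)$ satisfies the linear equation $(a_2^2+2pa_2+1)x+p^2-1=0$. The two computations are equivalent (your $|A_0A_2|^2=a_1^2+a_2^2+2a_1a_2\cos(\delta/2)$ together with $|A_0A_2|=2r\sin(\delta/2)$ yields the same formula for $r^2$), but yours makes the constructibility transparent without any squaring step.
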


\begin{remark}\label{remarkZhF}
For a fixed $\delta$, the situation can be different. We know from school and Cz\'edli and Kunos~\cite{czgkunos}  or  Screiber~\cite{schreiber}  that $\sfan{3}{2\pi,a_1,a_2,a_3}$ and $\sfan{4}{2\pi,a_1,\dots, a_4}$ can be constructed from $\tuple{a_1,a_2,a_3}$ and  $\tuple{a_1,\dots,a_4}$, respectively, in general. On the other hand, we know from Cz\'edli~\cite[Theorem 1.1(v)]{czgthalesian} that  $\sfan{3}{\pi,1,2,3}$ exists but it cannot be constructed from its side lengths.
\end{remark}

\subsection*{Outline} The rest of the paper is devoted to the proofs of our theorems and also to some additional statements that make these theorems a bit stronger by tailoring special conditions on possible data determining non-constructible $n$-fans. Section~\ref{sectionsurvey} lists some well-known concepts, notations, and facts from algebra for later reference; readers familiar with irreducible polynomials and field extensions may skip most parts of this section. Section~\ref{sectionproofprop} contains the above-mentioned additional statements as propositions, and it contains almost all the proofs of the paper.

\section{A short overview of the algebraic background}\label{sectionsurvey}
A polynomial is \emph{primitive} if the greatest common divisor  of its coefficients is 1.
The following well-known statement is due to C.\,F.\ Gauss; we cite parts \eqref{lemmagaussa} and \eqref{lemmagaussc}  from  Cameron~\cite[Theorem 2.16 (page 90) and Proposition 7.24 (page 260)]{cameron}, while  \eqref{lemmagaussb}  follows from \eqref{lemmagaussc}.

\begin{lemma}\label{lemmagauss} If $R$ is a unique factorization domain with field of fractions $F$, then
\begin{enumeratei}
\item\label{lemmagaussa} the polynomial ring $R[x]$ is also a unique factorization domain, 
\item\label{lemmagaussb} if a polynomial is irreducible in $R[x]$, then it is also irreducible in $F[x]$, and
\item\label{lemmagaussc} a primitive polynomial is irreducible in $R[x]$ iff it is irreducible in $F[x]$.
\end{enumeratei}
\end{lemma}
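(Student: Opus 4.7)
The plan is to reduce everything to the classical \emph{Gauss lemma}: the product of two primitive polynomials in $R[x]$ is primitive. Granted this, all three parts of the statement follow in a standard fashion; the only step that requires real work is the Gauss lemma itself, which I would tackle first.

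To establish it, I would argue by contradiction. Assume $f,g\in R[x]$ are primitive but some irreducible $p\in R$ divides every coefficient of $fg$. Since $R$ is a UFD, the principal ideal $(p)$ is prime, so $R/(p)$ is an integral domain and the coefficient-reduction map $R[x]\to (R/(p))[x]$ is a ring homomorphism into a domain. The assumption gives $\overline f\cdot\overline g=\overline{fg}=0$, forcing $\overline f=0$ or $\overline g=0$; that is, $p$ divides every coefficient of $f$ or of $g$, contradicting primitivity.

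With the Gauss lemma in hand, part (iii) is the usual ``clearing denominators'' argument: if a primitive $f\in R[x]$ factors nontrivially as $g_1 g_2$ in $F[x]$, write each $g_i=(a_i/b_i)\wt g_i$ with $\wt g_i\in R[x]$ primitive and $a_i,b_i\in R$ coprime; then $b_1 b_2 f=a_1 a_2\,\wt g_1\wt g_2$, and since both $f$ and $\wt g_1\wt g_2$ are primitive (the latter by the Gauss lemma), comparing contents forces $a_1 a_2/(b_1 b_2)$ to be a unit of $R$. This lifts the $F[x]$-factorization to a nontrivial $R[x]$-factorization. The reverse implication in (iii) is immediate because primitivity of $f$ rules out proper constant factors coming from $R$.

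For (i), existence of factorizations into $R[x]$-irreducibles is produced by first extracting the content, factoring the primitive part in the Euclidean ring $F[x]$, and lifting the factors back via the same denominator trick; uniqueness then follows from (iii) together with unique factorization in $F[x]$. Finally, (ii) is a one-step consequence of (iii): if $f\in R[x]$ is irreducible of positive degree, write $f=c\,\wt f$ with $c$ its content and $\wt f$ primitive; irreducibility forces $c$ to be a unit, so $f$ is essentially primitive and (iii) applies. The main obstacle throughout is the Gauss lemma itself; everything after it is bookkeeping with contents and denominators, and the mod-$p$ reduction sketched above is the cleanest route to that lemma.
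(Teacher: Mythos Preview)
Your argument is correct and follows the classical route through Gauss's content lemma. The paper, however, does not prove this lemma at all: it merely cites parts (i) and (iii) from Cameron's \emph{Introduction to Algebra} and remarks in one line that (ii) follows from (iii). So your write-up is not so much a different approach as an actual proof where the paper gives only a reference; what you sketch is the standard textbook development (content extraction, clearing denominators, mod-$p$ reduction for the product-of-primitives lemma) and is presumably close to what Cameron does. One small remark: statement (ii), taken literally, fails for degree-$0$ irreducibles (a prime $p\in R$ is irreducible in $R[x]$ but a unit in $F[x]$); you tacitly restrict to positive degree, which is the intended reading and the only case the paper ever uses.
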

For the ring $\zz$ of integers and $k\in\mathbb N$, the field of fractions of $\zz[x_1,\dots,x_k]$ is $\qq(x_1,\dots,x_k)$, the field of rational $k$-variable functions over $\qq$. Note that for $c_1,\dots,c_k\in\rr$, we say that these  numbers are \emph{algebraically independent} over $\qq$ if the map $f(x_1,\dots,x_k)\mapsto f(c_1,\dots,c_k)$ from $\zz[x_1,\dots,x_k]$  to $\rr$ extends to a field embedding  $\qq(x_1,\dots,x_k)$ to $\rr$. For $k=1$, this means that $c_1$ is a transcendental number (over $\qq$). The field generated by $\qq{}\cup\set{c_1,\dots,c_n}$ is denoted by $\qq(c_1,\dots,c_k)$; it is isomorphic to $\qq(x_1,\dots,x_k)$ provided that 
$c_1,\dots,c_k\in\rr$  are algebraically independent over $\qq$ .
We often write $\qqp$ instead of $\qq(p)$, even if $p$ is not transcendental.

Given a unique factorization domain $R$ with field of fractions $F$, the polynomial rings $R[x,y]$, $R[x][y]$, and $R[y][x]$ are well known to be isomorphic. This fact allows us to write $f_x(y)$ and $f_y(x)$ instead of $f(x,y)\in R[x,y]$. That is, $f_x(y)$, $f_y(x)$, and  $f(x,y)$ are essentially the same polynomials but we put an emphasis on $f_x(y)\in R[x][y]\subseteq F(x)[y]$ and  $f_y(x)\in R[y][x]\subseteq F(y)[x]$. Therefore, 
the following convention applies in the paper: 
\begin{equation}
\parbox{9cm}{no matter which of $f(x,y)\in R[x,y]$, $f_x(y)\in R[x][y]$, and $f_y(x)\in R[y][x]$ is given first, we can also use the other two.}
\label{teqconV}
\end{equation}
Note that in many cases but not always, $R$ and $F$ will be $\zz$ and $\qq$. 
 The \emph{degree} of a polynomial $g(x)$ will be denoted by $\degx x{g(x)}$ or $\degx x{g}$.

The following statement is well known and usually taught for MSc students;
see, for example, Cz\'edli and Szendrei~\cite[Theorem V.3.6]{czgsza}; see also the list of references right before Cz\'edli and Kunos~\cite[Proposition 3.1]{czgkunos}.

\begin{proposition}\label{propwhncStr}
Let $u,c_1,\dots,c_t\in \rr$. If there exists an irreducible polynomial 
\[h(x)\in\qq(c_1,\dots,c_t)[x]\] 
such that $h(u)=0$ and  $\degx x{h(x)}$ is not a power of $\,2$, then $u$ is not constructible from $\qq\cup \set{c_1,\dots,c_t}$ \textup{(\kern-0.5pt}or, equivalently and according to the present terminology, $u$ is not constructible from $\tuple{1,c_1,\dots,c_t})$.
\end{proposition}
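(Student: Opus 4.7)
The plan is to invoke the classical tower-of-quadratic-extensions characterization of constructibility, applied over the ground field $K:=\qq(c_1,\dots,c_t)$ rather than $\qq$. Geometrically, declaring the points $\pair 00, \pair 10, \pair{c_1}0, \dots, \pair{c_t}0$ to be given means that, in the algebraic translation, a real number $v$ is constructible from $\qq\cup\set{c_1,\dots,c_t}$ if and only if there is a tower
\[
K = K_0 \subseteq K_1 \subseteq \dots \subseteq K_s \subseteq \rr
\]
of subfields of $\rr$ with $[K_{i+1}:K_i]=2$ for each $i$, such that $v\in K_s$. This equivalence is the standard reduction of straightedge-and-compass operations to solving a sequence of linear and quadratic equations; it is exactly what is laid out in the references cited right before the statement, e.g.\ Cz\'edli and Szendrei~\cite{czgsza}.

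Assuming, for contradiction, that $u$ is constructible from $\qq\cup\set{c_1,\dots,c_t}$, I would fix such a tower with $u\in K_s$. By the tower formula for field degrees,
\[
[K_s:K] \,=\, \prod_{i=0}^{s-1}[K_{i+1}:K_i] \,=\, 2^s.
\]
Since $K\subseteq K(u)\subseteq K_s$, the multiplicativity of degrees forces $[K(u):K]$ to divide $2^s$, so $[K(u):K]$ is itself a power of $2$.

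The final step identifies this degree with $\degx x{h(x)}$. Because $h(x)\in K[x]$ is irreducible with $h(u)=0$, it is an associate of the minimal polynomial of $u$ over $K$; hence $\degx x{h(x)} = [K(u):K]$, which we have just shown is a power of $2$. This contradicts the hypothesis that $\degx x{h(x)}$ is not a power of $2$, completing the proof.

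I expect the main (minor) obstacle to be the transition from the geometric notion ``constructible from $\tuple{1,c_1,\dots,c_t}$'' to the algebraic tower over $K=\qq(c_1,\dots,c_t)$; however, since the paper explicitly encodes data as points on the $x$-axis and permits citation of Cz\'edli and Szendrei~\cite{czgsza} and the list of references in Cz\'edli and Kunos~\cite{czgkunos}, this step can be done by a direct appeal rather than re-derivation. Everything else is a short application of the tower law and the definition of minimal polynomial.
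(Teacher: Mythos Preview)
Your argument is the standard and correct one: the tower-of-quadratic-extensions characterization over $K=\qq(c_1,\dots,c_t)$, the tower law giving $[K_s:K]=2^s$, divisibility of $[K(u):K]$, and identification of $[K(u):K]$ with $\degx x{h(x)}$ via irreducibility. There is no gap.

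As for comparison: the paper does not actually prove this proposition. It is presented there as a well-known fact ``usually taught for MSc students'' and is disposed of by citation to Cz\'edli and Szendrei~\cite[Theorem V.3.6]{czgsza} and to the references preceding \cite[Proposition 3.1]{czgkunos}. So your write-up is not so much a different route as the route that the cited textbooks take; you have simply unpacked what the paper leaves to the literature.
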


The following statement is also well known, and it is even trivial for fields rather than unique factorization domains; having no reference at hand, we are going to give a proof.

\begin{lemma}\label{lemmaquadrpol}
Let $R$ be a unique factorization domain with field of fractions $F$. Let $f(x)=ax^2+bx+c\in R[x]$ be a \emph{primitive} quadratic polynomial. If its discriminant, $D=b^2-4ac$, is not a square in $R$, then $f(x)$ is irreducible in $R[x]$ and, consequently, also in $F[x]$.
\end{lemma}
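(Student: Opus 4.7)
The plan is to reduce to the case over $F$ using the primitivity hypothesis, and then reduce the irreducibility of a quadratic to a question about its discriminant being a square. First, by Lemma~\ref{lemmagauss}\eqref{lemmagaussc}, since $f(x)$ is primitive, it is irreducible in $R[x]$ if and only if it is irreducible in $F[x]$. So the whole statement reduces to showing that $f(x)=ax^2+bx+c$ is irreducible in $F[x]$.

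Since $\deg f = 2$, irreducibility in $F[x]$ is equivalent to $f$ having no root in $F$, and by the quadratic formula $f$ has a root in $F$ exactly when $D = b^2-4ac$ is a square in $F$. Hence the task becomes the contrapositive: if $D$ is a square in $F$, then $D$ is already a square in $R$.

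To prove this transfer, I would suppose $D = (p/q)^2$ with $p,q \in R$ and $q \neq 0$. Using that $R$ is a UFD, I may cancel common factors and assume $\gcd(p,q) = 1$. Then $q^2 D = p^2$, so every prime of $R$ dividing $q$ also divides $p^2$ and hence $p$; combined with $\gcd(p,q)=1$, this forces $q$ to be a unit in $R$. Therefore $D = (pq^{-1})^2$ lies in $R^2$, as required. The contrapositive then yields that $f$ has no root in $F$, completing the proof.

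The main ``obstacle'' is not much of one: it is the short UFD calculation in the previous paragraph. Over a field everything would collapse, and the passage between $R[x]$ and $F[x]$ is supplied by Gauss's Lemma, so the only real content beyond that is the square-transfer step, which is where unique factorization in $R$ enters essentially.
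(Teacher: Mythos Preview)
Your proof is correct and follows a somewhat different route from the paper's. The paper argues directly in $R[x]$: assuming $f$ is reducible, primitivity forces a factorization $f(x)=(a_1x+b_1)(a_2x+b_2)$ with $a_i,b_i\in R$; applying the quadratic formula to the root $-b_1/a_1\in F$ and clearing the denominator $2a=2a_1a_2$ yields $-2b_1a_2=-b\pm\sqrt{D}$, whence $D=(b-2b_1a_2)^2$ is visibly a square in $R$. Thus the paper reads off an element of $R$ whose square is $D$ directly from the coefficients of the $R[x]$-factorization. You instead first pass to $F[x]$ via Gauss's Lemma, reduce irreducibility to whether $D$ is a square in $F$, and then prove a separate UFD transfer statement (``square in $F$ $\Rightarrow$ square in $R$''). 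Your version is more modular and isolates the role of unique factorization in a reusable lemma; the paper's version is shorter since it never needs that general transfer. Both proofs invoke the quadratic formula over $F$ and hence tacitly assume $2\ne 0$ in $F$; this is harmless, since in characteristic $2$ one has $D=b^2$, so the hypothesis ``$D$ is not a square in $R$'' is vacuous anyway. One cosmetic remark: in this paper the symbol $p$ already denotes $\cos(\delta/2)$, so you may wish to rename the numerator and denominator in your square-transfer argument.
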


\begin{proof} Suppose to the contrary that $f(x)$ is reducible. Since it is primitive, it cannot have a nontrivial divisor of degree 0. Hence, there are $a_1,b_1,a_2,b_2\in R$ such that $ax^2+bx+c=f(x)=(a_1x+b_1)(a_2x+b_2)$. Comparing the leading coefficients, $a=a_1a_2$. Since $-b_1/a_1$ is a root of $f(x)$, the well-known formula gives that
\[ -\frac{b_1}{a_1}=\frac{-b\pm\sqrt D}{2a},
\]
After multiplying by $2a=2a_1a_2$, we obtain that 
$-2b_1a_2= -b\pm\sqrt D$.
Therefore, $D=(b-2b_1a_2)^2$ is a square of $b-2b_1a_2\in R$. This contradicts our hypothesis and proves the lemma.
\end{proof}

\section{Proofs and propositions}\label{sectionproofprop}

\begin{proposition}\label{propevennddd}
If $n\geq 4$ is an \emph{even} integer, then 
for every real number $\delta$ belonging to the open interval $(0,2\pi)$, there 
exists a \emph{rational} number $c$ such that the $n$-fan
$\dfan{n}{\delta,1,\dots,1,c}$ exists but it cannot be constructed from $\tuple{\cos(\delta/2),1,c}$.
\end{proposition}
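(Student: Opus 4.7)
The plan is to derive a polynomial equation satisfied by $u:=1/r^2$, show that for even $n$ this polynomial has degree $n-1$ in $u$ and is irreducible in $\qqp[c,u]$, specialize $c$ to a rational value without losing irreducibility, and conclude via Proposition~\ref{propwhncStr} using that $n-1$ is odd and $\geq 3$, hence not a power of $2$.

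Setting $\theta=\arccos(1/r)$, $\beta=\arccos(c/r)$, and $q=1/r$, the convexity and central-angle conditions amount to $(n-1)\theta+\beta=\delta/2$. The cosine addition formula, combined with $\cos((n-1)\theta)=T_{n-1}(q)$, $\sin((n-1)\theta)=U_{n-2}(q)\sqrt{1-q^2}$, $\cos\beta=cq$, a squaring to clear $\sqrt{(1-p^2)(1-q^2)}$, and the Chebyshev identity $T_{n-1}(q)^2+(1-q^2)U_{n-2}(q)^2=1$, yields
\[
F(c,q) \;:=\; c^2q^2 - 2pcq\,T_{n-1}(q) + T_{n-1}(q)^2 + p^2 - 1 \;=\; 0,
\]
where $p=\cos(\delta/2)$. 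For $n$ even, $T_{n-1}$ is an odd polynomial and $U_{n-2}$ is even, so every term of $F$ has even degree in $q$. Writing $u=q^2$, $T(u):=T_{n-1}(q)/q$, and $W(u):=U_{n-2}(q)$, we have $qT_{n-1}(q)=uT(u)$, $T_{n-1}(q)^2=uT(u)^2$, $U_{n-2}(q)^2=W(u)^2$, and $F$ collapses to
\[
\tilde F(c,u) \;=\; uc^2 - 2pcu\,T(u) + uT(u)^2 + p^2 - 1 \;=\; 0,
\]
a polynomial of degree $n-1$ in $u$ and $2$ in $c$ with root $u_0=1/r^2$.

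To prove $\tilde F(c,u)\in\qqp[c,u]$ is irreducible, I would view it as a quadratic in $c$ over $\qqp[u]$ whose discriminant, after using $uT(u)^2=T_{n-1}(q)^2=1-(1-u)W(u)^2$, equals
\[
\Delta \;=\; 4(p^2-1)\,u(u-1)\,W(u)^2.
\]
Extracting the square $(2W(u))^2$ leaves $(p^2-1)u(u-1)$, which is not a square in $\qqp(u)$ because $u(u-1)$ is squarefree in $\qqp[u]$ (a product of two distinct linear factors) and $p^2-1$ is a nonzero constant (as $\delta\in(0,2\pi)$). Lemma~\ref{lemmaquadrpol} then gives irreducibility of $\tilde F$ over $\qqp(u)[c]$, and primitivity of $\tilde F$ in $c$ over $\qqp[u]$ holds because its leading coefficient $u$ and constant term $uT(u)^2+p^2-1$ are coprime (the latter equals $p^2-1\neq 0$ at $u=0$). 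Gauss's Lemma~\ref{lemmagauss} therefore promotes this to irreducibility of $\tilde F$ in $\qqp[c,u]$.

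The last step is to specialize $c$ to a rational number. The field $\qqp$ is Hilbertian in both cases---a number field when $p$ is algebraic, and the function field $\qq(t)$ over the Hilbertian base $\qq$ when $p$ is transcendental---so Hilbert's irreducibility theorem, applied (to keep $c_0\in\qq$ rather than merely $c_0\in\qqp$ in the transcendental case) to $\tilde F(c,p,u)\in\zz[c,p,u]$ treated as a family over $\qq[c]$, supplies a dense set of $c_0\in\qq$ for which $\tilde F(c_0,u)\in\qqp[u]$ remains irreducible of degree $n-1$. By continuity \eqref{parboxscGjmB}, the $n$-fan $\dfan{n}{\delta,1,\dots,1,c_0}$ exists whenever $c_0$ is sufficiently close to $1$, so a density argument selects such a $c_0$ meeting both requirements. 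Then $u_0=1/r^2$ is a root of the irreducible $\tilde F(c_0,u)$ of degree $n-1$, not a power of $2$, and Proposition~\ref{propwhncStr} declares $u_0$ (and hence $r$ and the $n$-fan) non-constructible from $\tuple{p,1,c_0}$. The main technical difficulty is exactly this rational-specialization step: producing $c_0\in\qq$ (rather than merely $c_0\in\qqp$) from Hilbert's theorem, which I would handle by starting the analysis in $\zz[c,p,u]$ so that classical Hilbert irreducibility over $\qq$ can be invoked directly.
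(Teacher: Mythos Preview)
Your derivation of the polynomial and the irreducibility argument via the quadratic-in-$c$ discriminant are essentially the paper's proof, modulo the cosmetic change of variable $u=1/r^2$ versus the paper's $u=1/r$ (which halves the degree from $2(n-1)$ to $n-1$; both are non-powers of $2$). The paper argues that the discriminant is not a square by observing it tends to $-\infty$ along real values, while you factor it explicitly as $4(p^2-1)u(u-1)W(u)^2$; both are valid. A minor bonus of your version is that it treats $\delta=\pi$ uniformly, whereas the paper defers that case to \cite{czgthalesian}.

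The substantive difference, and the one genuine gap, is the specialization to a \emph{rational} $c$. The paper does not use Hilbert irreducibility at all; it invokes the Rational Parameter Theorem of \cite[Theorem~11.1]{czgkunos}, which reduces the problem to showing non-constructibility for all $c$ transcendental over $\qqp$ in a neighbourhood of $1$ --- and for such $c$ the irreducibility of $\tilde F$ in $\qqp[c,u]\cong\qqp(c)[u]$ is exactly what is needed, with no further specialization. Your route via Hilbert is a reasonable alternative, but the step you flag as ``the main technical difficulty'' is not fully handled. Working in $\zz[c,p,u]$ and applying Hilbert over $\qq$ to specialize $c$ yields $c_0\in\qq$ with $G(c_0,p,u)$ irreducible in $\qq[p,u]$; for transcendental $p_0$ this does give irreducibility in $\qqp[u]\cong\qq(p)[u]$ via Gauss, but for \emph{algebraic} $p_0$ the passage from $\qq[p,u]$ to $\qq(p_0)[u]=\qq[p]/(m(p))[u]$ is a second specialization that can destroy irreducibility (e.g.\ $u^2-p$ is irreducible in $\qq[p,u]$ but reducible at $p_0=4$). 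To repair this you would need the additional fact that a thin subset of a number field $\qqp$ meets $\qq$ in a thin subset of $\qq$ (as in Serre's \emph{Topics in Galois Theory} or Fried--Jarden), which is true but not the ``classical Hilbert irreducibility over $\qq$'' you appeal to. Either cite that result explicitly for the algebraic case, or adopt the paper's Rational Parameter Theorem, which sidesteps the issue entirely.
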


\begin{proof} The case $\delta=\pi$ has been settled in Cz\'edli~\cite{czgthalesian}; see Cases 3 and 4 in page 68 there and note that our $n$ corresponds to $n+1$ in \cite{czgthalesian} and $\sqrt c$ and $c$ are equivalent data from the perspective of geometric constructibility. Hence, we can assume that $\delta\neq \pi$. 
We denote $\cos(\delta/2)$ by $p$; it belongs to the open interval $(-1,1)$ and it is distinct from 0. The smallest subfield of $\rr$ that includes  $\qq\cup\set p$ is denoted by $\qqp$.
We now from \eqref{parboxscGjmB} that if $c$ is sufficiently close to 1, then $\dfan{n}{\delta,1,\dots,1,c}$ exists. This fact and the Rational Parameter Theorem of Cz\'edli and Kunos~\cite[Theorem 11.1]{czgkunos} yield that it suffices to show that 
$\dfan{n}{\delta,1,\dots,1,c}$ is not constructible for 
those $c$ in a small neighborhood of 1 that are transcendental over $\qqp$. Since $\qqp(c)$ is isomorphic to the field $\qqp(y)$ of rational functions over $\qqp$ for these transcendental $c$, we can treat $c$ later as an indeterminate $y$. Note that this paragraph, that is the first paragraph of the present proof, would also be appropriate for $\dfan{n}{\delta,1,\dots,1,c,c}$; this fact will be needed only in another proof of the paper.

Let $k:=n-1$; it is an odd number and $k\geq 3$. As always in this paper, $r$ denotes the radius of the circular arc.  We let $u:=1/r$. As Figure~\ref{fig1} approximately shows, for the ``half angles''
$\alpha:=\alpha_1=\dots=\alpha_{k}$ and $\beta:=\alpha_n$, we have that 
\begin{equation}
\cos\alpha=u,\quad \sin\alpha=\sqrt{1-u^2}, 
\quad \cos\beta=c u,\quad \sin\beta= \sqrt{1-c^2u^2}\text.
\label{eqcosiperr}
\end{equation} 
Since we work with half angles, $k\alpha+\beta=\delta/2$, whereby $k\alpha=\delta/2 - \beta$. 
Using the well-known formula for the cosine of a difference, we obtain that 
\begin{equation}
\begin{aligned}
\cos(\delta/2 - \beta)&=\cos(\delta/2)\cos\beta + \sin(\delta/2)\sin\beta \cr
&= p c u + \sqrt{1-p^2}\cdot \sqrt{1-c^2u^2}.
\end{aligned}
\label{eqdzghmnGrT}
\end{equation}
We also need the following well-known equality, which we combine with \eqref{eqcosiperr}:
\begin{equation}
\begin{aligned}
\cos(k\alpha)&=\sum_{2\mid j=0}^k (-1)^{j/2} \binomial k j ( \cos \alpha)^{k-j}\cdot (\sin\alpha)^{j}\cr
&= \sum_{2\mid j=0}^k (-1)^{j/2} \binomial k j u^{k-j}\cdot (1-u^2)^{j/2}=:\csg (u).
\end{aligned}
\label{cossum}
\end{equation}
Note that $\csg $ is a polynomial over $\zz$ since $j$ above runs through \emph{even} numbers. 
Since the coefficient of $u^k$ is
\begin{equation}
\sum_{2\mid j=0}^k (-1)^{j/2} \binomial k j ({-1})^{j/2} =  \sum_{2\mid j=0}^k  \binomial k j=2^{k-1}\neq 0,
\label{eqtwkmoeven}
\end{equation}
we conclude that 
\begin{equation}
\parbox{7.2cm}{the leading coefficient of $\csg(u)$ is a positive integer and the degree of $u$ in $\csg(u)$ is $k$.}
\label{pboxfokg}
\end{equation}
Since $k\alpha=\delta/2-\beta$,  \eqref{eqdzghmnGrT} and \eqref{cossum} give the same real number. After 
rearranging the equality of  \eqref{eqdzghmnGrT} and \eqref{cossum} and squaring, 
\begin{equation}
\bigl(\csg(u)-p c u\bigr)^2 = (1-p^2)(1-c^2u^2).
\label{eqdwsiezBfhrB}
\end{equation}
This encourages us to consider the polynomial
\begin{equation}
\pfa (x,y)= \bigl(\csg(x)-p y x\bigr)^2 - (1-p^2)(1-y^2x^2)\in \qqp[x,y],
\label{eqfxzegz}
\end{equation}
which is obtained from \eqref{eqdwsiezBfhrB} by  substituting
$\pair u c  \leftarrow \pair x y$ and rearranging. The superscript of $\pfa$ reminds us to ``even'' and $k$. Since $k\geq 3$ and it is odd,
the  degree $\deg_x(\pfa )$ of $\pfa $ in $x$ is $2\cdot\deg_x (\csg)=2k$ by \eqref{pboxfokg}, whence $\deg_x(\pfa )$ is not a power of 2. Note that $\deg_x(\pfa )$ remains the same if we replace $y$ by $c$, since $c$ is transcendental over $\qqp$. Therefore, Proposition~\ref{propwhncStr} will imply the non-constructibility of $u$ and that of our polygon as soon as we show that
$\pfa (x,c)=\pfa_{c}(x)\in \qqp(c)[x]$ is an irreducible polynomial. Let $\phi\colon\qqp(c)\to \qqp(y)$ be the canonical isomorphism that acts identically on $\qqp$ and maps $c$ to $y$. 
This $\phi$ extends to an isomorphism $\what\phi\colon\qqp(c)[x]\to \qqp(y)[x]$ with the property $\what\phi(x)=x$ in the
usual way.
It suffices to show that $\what\phi\bigl(\pfa (x,c)\bigr)$ is irreducible in $\qqp(y)[x]$. But $\what\phi\bigl(\pfa (x,c)\bigr)=\pfa_y (x)\in \qqp[y][x]$ and $\qqp(y)$ is the field of fractions of $\qqp[y]$. Thus, by Lemma~\ref{lemmagauss}, it suffices to show that $\pfa_y(x)=\pfa(x,y)$ is irreducible in $\qqp[y][x]\cong \qqp[x,y]\cong \qqp[x][y]$. So, in the rest of the proof, we deal only with the irreducibility of 
the polynomial $\pfa_x(y)=\pfa(x,y)$.

Rearranging  \eqref{eqfxzegz} according to the powers of $y$, we obtain that
\begin{equation}
\begin{aligned}
\pfa_x(y)&= \Bigl(p^2x^2 + (1-p^2)x^2\Bigr)\cdot y^2
-2x p \csg(x)\cdot y  \cr
&\kern 4.2cm +\Bigl(\csg(x)^2 -(1-p^2) \Bigr) \cr
&= x^2\cdot y^2 -2 p x \csg(x)\cdot y  + (\csg(x)^2 +p^2-1) \in \qqp[x][y].
\end{aligned}
\label{eqdzmBfQ}
\end{equation}
Since $p\in(-1,1)\setminus\set 0$, we have that $-1<p^2-1<0$, whence  $p^2-1$ is not an integer. Thus, since $\csg(x)\in \zz[x]$, the 
constant term in $\csg(x)^2 +p^2-1$ is nonzero. 
In $\qqp[x]$, which is a unique factorization domain, $x$ is an irreducible element. The above-mentioned nonzero term guarantees that $x$ does not divide $\csg(x)^2 +p^2-1$. Thus, $\pfa_x(y)$ is a primitive polynomial over  $\qqp[x]$, and we are going to apply Lemma~\ref{lemmaquadrpol}. To do so, we compute the discriminant $\pDa_{x}$ of $\pfa_x(y)$ as follows:
\begin{equation}
\begin{aligned}
\pDa_{x} &:= 4p^2 x^2\csg(x)^2 - 4 x^2(\csg(x)^2+p^2-1)\cr
&\phantom{:}=4(p^2-1)x^2 \cdot \bigl(\csg(x)^2 -1\bigr).
\end{aligned}
\label{eqdcsRmnt}
\end{equation}
Since $p^2-1<0$, it follows from \eqref{pboxfokg} that
\begin{equation}
\pDa_{t}\text{ tends to }\mathord-\infty\,\,\text{ as }t\in \qqp\text{ tends to }\infty.
\label{eqtndstmnftY}
\end{equation}
Now if $\pDa_x$ was of the form $h(x)^2$ for some $h(x)\in \qqp[x]$, then we would have that $\pDa_{t}=h(t)^2\geq 0$ for all $t\in \qqp$ and \eqref{eqtndstmnftY} would be impossible. Hence, $\pDa_{x}$ is not a square in $\qqp[x]$ and Lemma~\ref{lemmaquadrpol} yields the irreducibility of $\pfa_x(y)$, as required.
This completes the proof of Proposition~\ref{propevennddd}.
\end{proof}

Remark~\ref{remarkdhgzVbMq}
explains why we consider $(0,2\pi)$ rather than $(0,2\pi]$ in the following statement.

\begin{proposition}\label{propositionTcDDgNgl} If $n\in\set{1,2}$, then 
for every real number $\delta\in (0,2\pi)$, the $n$-fan
$\dfan{n}{\delta,d_1,\dots,d_n}$ can be constructed from $\tuple{\cos(\delta/2), d_1,\dots, d_n}$ in general.
\end{proposition}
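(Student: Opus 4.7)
The plan is to treat $n=1$ and $n=2$ separately by a direct construction. In both cases the crux is to show that the radius $r$ of the circular arc is a constructible real number from the given data $\tuple{p,d_1,\dots,d_n}$, where $p=\cos(\delta/2)$; once $r$ is at hand, the vertices follow routinely by marking successive central angles on a circle of radius $r$, since each half-angle $\alpha_i/2$ has the constructible cosine $d_i/r$, and $\delta$ itself is constructible from $p$ via the unit-circle point $\pair{p}{\sqrt{1-p^2}}$ at angle $\delta/2$.

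For $n=1$ the single side subtends the central angle $\delta$ at the center $C$, so $d_1 = r\,|\cos(\delta/2)| = r\,|p|$. For $\delta\in(0,2\pi)$ the only potentially problematic value is $\delta=\pi$, giving $p=0$; but then we would need $d_1=0$, contradicting $d_1\in\rr^+$. Thus $p\neq 0$ whenever the $1$-fan actually exists, and $r=d_1/|p|$ is constructible; the fan is then built on the circle of radius $r$ after marking the angle $\delta$ at $C$.

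For $n=2$ let $\alpha_1,\alpha_2$ denote the two central angles, so $\alpha_1+\alpha_2=\delta$ and, by convexity, $d_i=r\cos(\alpha_i/2)$ with $\alpha_i/2\in(0,\pi/2)$. Rewriting $\cos(\alpha_2/2)=\cos(\delta/2-\alpha_1/2)$ via the cosine subtraction formula and using $\sin(\delta/2)>0$ (since $\delta/2\in(0,\pi)$) together with $\sin(\alpha_1/2)=\sqrt{1-d_1^2/r^2}$ will give
\[
d_2 - p\,d_1 \,=\, \sqrt{1-p^2}\cdot\sqrt{r^2-d_1^2}.
\]
Since $\delta\in(0,2\pi)$ forces $p\in(-1,1)$ and hence $1-p^2>0$, squaring and rearranging yields
\[
r^2 \,=\, d_1^2 \,+\, \frac{(d_2-p\,d_1)^2}{1-p^2},
\]
a nonnegative element of $\qqp(d_1,d_2)$; therefore $r$ is constructible and the construction concludes as in the $n=1$ case.

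The only points I expect to check carefully are the sign conventions, in order to verify that the squaring step loses no information in the relevant ranges of $\alpha_i$ and $\delta$, and the observation that the formally degenerate case $p=0$ is either excluded outright ($n=1$) or simply reduces to the harmless $r^2=d_1^2+d_2^2$ ($n=2$). Both checks should be routine, so I anticipate no real obstacle.
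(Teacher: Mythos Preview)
Your proposal is correct and follows essentially the same approach as the paper: both arguments reduce the $n=2$ case to a single application of the cosine-subtraction formula and obtain a \emph{linear} equation for $r^2$ (equivalently for $u^2=1/r^2$), from which constructibility is immediate. The paper phrases this by specializing its equation (3.5) with $k=1$ to get $(c^2-2pc+1)u^2+p^2-1=0$, while you write the equivalent closed form $r^2=d_1^2+(d_2-pd_1)^2/(1-p^2)$; these are the same identity after normalizing $d_1=1$ and setting $u=1/r$, and your treatment of the $n=1$ case (including the $p=0$ exclusion) matches the paper's one-line dismissal of it as elementary.
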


\begin{proof} We assume that $n=2$ since the case $n=1$ is trivial by  elementary geometrical considerations. We can also assume that the  scale is chosen so that $d_1=1$. Let $c=d_2$. It is clear by \eqref{eqcosiperr} and \eqref{cossum} that $\ctwone(u)=\cos(\alpha)=u$. Substituting this into \eqref{eqdwsiezBfhrB}, an easy calculation leads to 
\begin{equation}(c^2 -2p c +1 )u^2+p^2-1=0.
\label{eqdibvzhbSnBtPs}
\end{equation} 
Since $p\in(-1,1)$,  $p^2-1$ is distinct from $0$. Hence, \eqref{eqdibvzhbSnBtPs} gives that the coefficient $c^2 -2pc +1$ of $u^2$ is nonzero. Thus, $u$ is the root of a quadratic polynomial over the field $\qq(p,c)$, whereby it is constructible. So are $r=1/u$ and our 2-fan. 
\end{proof}

\begin{proposition}\label{propoveoDdDdd}
If $n\geq 5$ is an \emph{odd} integer, then 
for every real number $\delta$ belonging to the open interval $(0,2\pi)$, there 
exists a \emph{rational} number $c$ such that 
\begin{enumerate}[\phantom{xxx}\upshape(i)]
\item\label{propoveoDdDdda} if $\delta\neq \pi$, then the $n$-fan
$\dfan{n}{\delta,1,\dots,1,c,c}$ exists but it cannot be constructed from $\tuple{\cos(\delta/2),1,c,}$, and
\item\label{propoveoDdDddb} if $\delta =  \pi$, then the $n$-fan
$\dfan{n}{\delta,1,\dots,1,1,c}$ exists but it cannot be constructed from $\tuple{\cos(\delta/2),1,c}=\tuple{0,1,c}$.
\end{enumerate}
\end{proposition}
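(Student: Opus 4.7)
The plan is to mimic the proof of Proposition~\ref{propevennddd}. Writing $p:=\cos(\delta/2)$, letting $\alpha,\beta$ be the half-angles of chords of central distances $1$ and $c$, and setting $u:=1/r$, in both parts the goal is to exhibit, for $c$ transcendental over $\qqp$, an irreducible polynomial in $\qqp(c)[x]$ with $u$ as a root and of degree not a power of $2$; Proposition~\ref{propwhncStr} and the Rational Parameter Theorem of \cite[Theorem~11.1]{czgkunos} then produce the desired rational $c$. Compared with the even-$n$ case, two genuinely new features appear: with two $c$-chords the natural polynomial is biquadratic (not quadratic) in $y$, so an extra biquadratic-factorization step is needed; and when $\delta=\pi$ the natural polynomial with two $c$-chords has an extraneous factor of $x^2$ whose removal would leave a residual polynomial of power-of-$2$ degree, which is why part~(ii) switches to the shape with a single $c$-chord.

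For part~(i), $k:=n-2$ is odd and $\geq 3$. Expanding $\cos(k\alpha)=\cos(\delta/2-2\beta)$ via \eqref{cossum} and the double-angle formulas, and squaring, yields $\pfb(u,c)=0$ with
\[
\pfb(x,y):=\bigl(\csg(x)-p(2y^2x^2-1)\bigr)^2-4y^2x^2(1-p^2)(1-y^2x^2)\in\qqp[x,y].
\]
A short expansion gives $\pfb(x,y)=G(x,y^2)$, where $G(x,z):=4x^4z^2-4x^2(p\csg(x)+1)z+(\csg(x)+p)^2$ is a primitive quadratic in $z$ over $\qqp[x]$, with discriminant
\[
\pDb_x:=16x^4(p^2-1)\bigl(\csg(x)^2-1\bigr).
\]
Exactly as in the argument culminating in \eqref{eqtndstmnftY}, $\pDb_t\to-\infty$ as $t\in\qqp\to\infty$, so $\pDb_x$ is not a square in $\qqp[x]$, and Lemma~\ref{lemmaquadrpol} yields the irreducibility of $G$.

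The main new step is to upgrade this to the irreducibility of $\pfb(x,y)$ in $\qqp(x)[y]$. Since $\pfb(x,y)=\pfb(x,-y)$, any two-quadratic factorization of $\pfb$ in $\qqp(x)[y]$ other than the one coming from reducibility of $G$ must take the form $(2x^2y^2+by+c)(2x^2y^2-by+c)$ with $c^2=(\csg(x)+p)^2$ and $b^2\in\{4x^2(1+p)(\csg(x)+1),\,4x^2(p-1)(\csg(x)-1)\}$. Using the classical factorizations $\csg(x)\pm 1=2^{k-1}(x\pm 1)Q_\pm(x)^2$, valid for odd $k$ because $x=\mp 1$ is a simple root of $\csg(x)\mp 1$ and all remaining roots have multiplicity $2$, together with the facts that $2^{k-1}$ is a square and $p\neq\pm 1$, the question reduces to whether $(1\pm p)(x\pm 1)$ is a square in $\qqp(x)$; it is not, since $x\pm 1$ is an irreducible polynomial of odd multiplicity in each product. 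So $\pfb$ admits no two-quadratic factorization in $\qqp(x)[y]$, and it has no linear factor either (as $G$ has no root in $\qqp(x)$), so $\pfb$ is irreducible in $\qqp(x)[y]$. A short primitivity check (using $\pfb(0,y)=p^2\neq 0$) combined with Lemma~\ref{lemmagauss} transports this to irreducibility of $\pfb(x,c)$ in $\qqp(c)[x]$ for transcendental $c$; since $\deg_x\pfb(x,c)=2k$ is twice an odd number $\geq 3$ and hence not a power of $2$, Proposition~\ref{propwhncStr} plus the Rational Parameter Theorem complete part~(i).

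Part~(ii) with $\delta=\pi$ is simpler. Here $p=0$, $k:=n-1$ is even and $\geq 4$, and $k\alpha+\beta=\pi/2$ gives $\csg(u)=\sin\beta$, so squaring produces $\csg(x)^2+y^2x^2-1=0$ at $(u,c)$. Since $\csg(0)^2=\cos^2(k\pi/2)=1$, and the factorizations $\csg(x)-1=2^{k-1}(x^2-1)P(x)^2$ and $\csg(x)+1=2^{k-1}\widetilde P(x)^2$ (valid for even $k$) combine with the observation that exactly one of $P,\widetilde P$ is divisible by $x$ and only to the first power, we obtain $\csg(x)^2-1=2^{2k-2}(x^2-1)x^2R(x)^2$ for some $R\in\qq[x]$ with $R(0)\neq 0$. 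Since $u\neq 0$, we may divide out the factor $x^2$ and work with
\[
h(x,y):=y^2+2^{2k-2}(x^2-1)R(x)^2\in\qq[x,y],
\]
a primitive quadratic in $y$ whose discriminant $-2^{2k}(x^2-1)R(x)^2$ is not a square in $\qq[x]$ because $1-x^2$ is squarefree of degree $2$. Lemma~\ref{lemmaquadrpol} and the same Gauss/Rational-Parameter-Theorem finish as in part~(i) yield the conclusion. The crucial numerical point is that $\deg_x h=2(k-1)$ is twice an odd integer, hence not a power of $2$---and this being so precisely when $k$ is even is the reason why part~(ii) uses the shape $(1,\ldots,1,c)$ in place of $(1,\ldots,1,c,c)$.
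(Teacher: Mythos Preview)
Your argument is correct, but in part~(i) it is considerably more laborious than the paper's. The paper observes at the outset that $c$ and $c^2$ are mutually constructible, and therefore substitutes $y$ for $c^2$ rather than for $c$. The resulting polynomial
\[
\pfb_x(y)=4x^4\,y^2-4x^2\bigl(p\,\csg(x)+1\bigr)\,y+\bigl(\csg(x)+p\bigr)^2
\]
is then honestly \emph{quadratic} in $y$, and the very discriminant computation you carry out on your auxiliary $G$ already finishes the irreducibility proof via Lemma~\ref{lemmaquadrpol}, exactly as in Proposition~\ref{propevennddd}. Your entire biquadratic-factorization step, together with the Chebyshev identities $\csg(x)\pm1=2^{k-1}(x\pm1)Q_\pm(x)^2$, is correct but avoidable by this single change of variable. (A minor slip: in your parenthetical justification the signs are reversed; it is $x=\mp1$ that is the simple root of $\csg(x)\pm1$, not of $\csg(x)\mp1$.) What your approach buys is that it never leaves the datum $c$; what the paper's trick buys is that part~(i) becomes a near-verbatim repetition of the even case. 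For part~(ii) the paper gives no argument at all, simply citing Case~4 of Cz\'edli~\cite{czgthalesian}; your self-contained treatment via the factorization $\csg(x)^2-1=2^{2k-2}(x^2-1)x^2R(x)^2$ is a genuine addition and correctly isolates why the shape $(1,\dots,1,c)$ must replace $(1,\dots,1,c,c)$ when $\delta=\pi$.
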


\begin{proof} 
First, we deal with \eqref{propoveoDdDdda}. Let $k=n-2$; note that $k$ is odd and $k\geq 3$. 
The first paragraph of the proof of Proposition~\ref{propevennddd} for $\dfan{n}{\delta,1,\dots,1,c,c}$ and  \eqref{eqcosiperr} will be used. In particular, $c$ is assumed to be transcendental, whence so is $c^2$. Since
\begin{equation}
\begin{aligned}
\cos(\delta/2 - 2\beta)&=\cos(\delta/2)\cos(2\beta) + \sin(\delta/2)\sin(2\beta) \cr 
&=p\bigl(2\cos^2(\beta)-1\bigr) + \sqrt{1-p^2}\cdot 2\sin\beta\cdot\cos\beta \cr
&= p (2 c^2 u^2 -1 ) + 2 c u\cdot  \sqrt{1-p^2}\cdot \sqrt{1-c^2u^2}
\end{aligned}
\label{eqdpzsMlTsQsT}
\end{equation}
and  $k\alpha+2\beta=\delta/2$ gives that 
$k\alpha=\delta/2 - 2\beta$,  \eqref{cossum} and \eqref{eqdpzsMlTsQsT} give the same value.  Rearranging the equality of these two values and squaring, we have that
\begin{equation}
\bigl(\csg(u) - p (2 c^2 u^2 -1 )\bigr)^2 =
 4 c^2 u^2 (1-p^2) (1-c^2u^2).
\label{eqdinbvmXwChg}
\end{equation}
Since $c$ and $c^2$ are mutually constructible from each other, we can assume that $c^2$ rather than $c$ is given. Rearranging \eqref{eqdinbvmXwChg} and substituting $\tuple {x, y}$ for $\tuple{ u, c^2}$, we obtain that $u$ is a root (in $x$) of the following polynomial
\begin{equation}
\begin{aligned}
\pfb_{y}(x) &= \pfb(x,y)=  \pfb_{x}(y)\cr
&= \bigl(\csg(x) - p (2 y x^2 -1 )\bigr)^2 -
 4 y x^2 (1-p^2) (1-y x^2)\cr
&=4x^4\cdot y^2 -\bigl(4p x^2\csg(x)+4x^2\bigr)\cdot y +\bigl( p+\csg(x)\bigr)^2.
\end{aligned}
\label{eqcjMJtzhBZnWrVt}
\end{equation}
Observe that $\deg_x(\pfb)=2\cdot\deg_x(\csg)=2k$ since $k\geq 3$. Thus, $\deg_x(\pfb)$  is not a power of $2$ since $k\geq 3$ is odd. Hence, by the same reason as in the paragraph right after \eqref{eqfxzegz}, it suffices to show that the quadratic polynomial $\pfb_{x}(y)$ is irreducible in $\qqp[x][y]$. 
The assumption  $\delta\in (0,2\pi)\setminus\set \pi$ gives that $0<p^2<1$.  Since $j$ is even in \eqref{cossum} but now $k$ is odd, the constant term in $\csg(x)$ is 0. So the constant term of $\bigl( p+\csg(x)\bigr)^2$ is $p^2\neq 0$. Hence, $x$, which is an irreducible element in the unique factorization domain $\qqp[x]$ and the only prime divisor of $4x^4$, does not divide $\bigl( p+\csg(x)\bigr)^2$. Thus, the quadratic polynomial $\pfb_{x}(y)$, see the last line of \eqref{eqcjMJtzhBZnWrVt}, is primitive. Its discriminant is 
\begin{equation}
\begin{aligned}
\pDb_{x}&= \bigl(4p x^2\csg(x)+4x^2\bigr)^2 
- 16x^4\cdot \bigl( p+\csg(x)\bigr)^2
\cr
&\phantom{:}= 16x^4(p^2-1)\cdot\bigl(\csg(x)^2-1\bigr) \in\qqp[x],
\end{aligned}
\label{eqdcsRsjhWms}
\end{equation}
which tends to $\mathord-\infty$ as $x\in \qqp$ tends to $\infty$. This leads to non-constructibility in the same way as \eqref{eqtndstmnftY} did. 

Case \eqref{propoveoDdDddb} needs an entirely different approach, which has already be given in Case 4 in pages 68--69 of Cz\'edli~\cite{czgthalesian};  take into account that our $n$ corresponds to $n+1$ in \cite{czgthalesian} and our $c$ and the $\sqrt c$ in  \cite{czgthalesian} are equivalent for constructibility.
\end{proof}

\begin{proposition}\label{propqwshrmd}
There exist \emph{rational} numbers $p$, $d_1$, and $d_2$ such that 
with the angle $\delta:=2\cdot\arccos(p)\in (0,2\pi)$, 
the $3$-fan $\dfan3{\delta,d_1,d_2,1}$
exists but it cannot be constructed from $\tuple{p=\cos(\delta/2),d_1,d_2,1}$.  Also, for every $\delta\in (0,2\pi)$ such that $\cos(\delta/2)$ is \emph{transcendental}, there are
\emph{rational} numbers $d_1$ and $d_2$ such that 
$\dfan3{\delta,d_1,d_2,1}$ exists but it cannot be constructed from $\tuple{\cos(\delta/2),d_1,d_2,1}$.
\end{proposition}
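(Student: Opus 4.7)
The plan is to exploit the symmetric choice $d_1=d_2=1$, which together with $d_3=1$ makes all three central distances equal and reduces both parts of the proposition to a single cubic equation. With this choice, the three half-angles are all equal to $\delta/6$; writing $u=1/r$ as in the previous proofs, we have $u=\cos(\delta/6)$, and the triple-angle formula $\cos(3\theta)=4\cos^3\theta-3\cos\theta$ with $\theta=\delta/6$ yields
\[
h(x):=4x^3-3x-p\in\qq(p)[x],\qquad h(u)=0.
\]
The fan $\dfan3{\delta,1,1,1}$ exists because $\delta\in(0,2\pi)$ gives $\delta/6\in(0,\pi/3)$ and hence $\cos(\delta/6)\in(1/2,1)$, so that $r=1/u\in(1,2)$ is well-defined.

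For part~2, with $p$ transcendental, I treat $p$ as an indeterminate by identifying $\qqp$ with the rational function field $\qq(y)$ in the canonical way. The goal is to apply Proposition~\ref{propwhncStr} to $h$: since $\degx x h=3$ is not a power of $2$, it suffices to show that $h$ is irreducible in $\qq(p)[x]$. Because $h\in\qq[p][x]$ is primitive (its coefficients are $4,0,-3,-p$), Lemma~\ref{lemmagauss} reduces the task to irreducibility in $\qq[p][x]$, and for a primitive cubic this amounts to having no root in $\qq(p)$. A standard rational-root-type computation in $\qq[p]$ does the job: any putative root $f(p)/g(p)$ with $\gcd(f,g)=1$ satisfies $f(4f^2-3g^2)=pg^3$, from which $g\mid 4f^2$ and hence $g\mid 4$, so $g$ is a nonzero rational constant; then a degree comparison in $p$ of the two sides (left-hand side of degree $3\deg f$ if $f$ is nonzero, right-hand side of degree $1$) rules out both constant and non-constant $f$.

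For part~1, I take the concrete rational choice $p=1/2$, $d_1=d_2=1$, so $\delta=2\pi/3$ and $u=\cos(\pi/9)$. Clearing the denominator, $h$ becomes $8x^3-6x-1\in\zz[x]$; either the Rational Root Theorem (this integer polynomial has no rational root, hence is irreducible over $\qq$) combined with Proposition~\ref{propwhncStr}, or equivalently the classical Gauss--Wantzel theorem (the regular $9$-gon is not constructible, so $\cos(\pi/9)$ is not constructible from rationals), shows that $u$, and therefore the fan $\dfan3{2\pi/3,1,1,1}$, is not constructible from $\tuple{p,d_1,d_2,1}=\tuple{1/2,1,1,1}$. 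The only real content of the proof is the irreducibility of $h$ over $\qq(p)$ in part~2, which is where the most care is needed, but as indicated it reduces to a routine rational-root argument inside $\qq[p]$.
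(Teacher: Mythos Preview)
Your proof is correct, and it is genuinely simpler than both of the paper's arguments. The paper's first proof treats $d_1,d_2,p$ as three algebraically independent parameters, derives a sextic polynomial $h(y,z,w,x)$ for $u^2$, and then certifies irreducibility by specializing to $(y,z,w)=(2,3,2)$ and invoking computer algebra. The paper's second proof avoids computation but imports the Limit Theorem of Cz\'edli--Kunos to push the known non-constructibility at $\delta=2\pi$ into a transcendental neighborhood. Your symmetric choice $d_1=d_2=1$ collapses everything to the Chebyshev cubic $4x^3-3x-p$, whose irreducibility over $\qq(p)$ is immediate (indeed, it is linear in $p$ over $\qq[x]$, hence irreducible in $\qq[p,x]$, and primitive in $\qq[p][x]$, so Lemma~\ref{lemmagauss} finishes). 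For the rational instance you recover exactly the nonagon example the paper mentions after Theorem~\ref{thmmain}.

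What the paper's heavier route buys is the explicit two-parameter polynomial $h(y,z,w,x)$, which is later reused in the computer verification of Theorem~\ref{thmvltcorolyclS}\eqref{thmvltcorolyclSf}; your argument does not produce that by-product. A tiny remark on exposition: in your rational-root step the divisibility you actually derive is $g\mid 4f^3$ (from $4f^3=3fg^2+pg^3$), or equivalently $g\mid 4f^2-3g^2$ and hence $g\mid 4f^2$; either way $\gcd(f,g)=1$ gives $g\mid 4$, so the conclusion stands.
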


\begin{proof} Like for the special values of $\delta$ considered in Cz\'edli~\cite{czgthalesian} and   Cz\'edli and Kunos~\cite{czgkunos}, the 3-fan $\dfan3{\delta,d_1,d_2,1}$ depends continuously on its parameters. We will soon see from \eqref{eqtxtthePolynom} that the corresponding  dependence  on  $\tuple{p,d_1,d_2,1}:=\tuple{\cos(\delta/2),d_1,d_2,1}$
is polynomial, whereby it remains polynomial even after fixing some parameters and letting only the rest remain indeterminates. 
Hence, a repeated use of the Rational Parameter Theorem of Cz\'edli and Kunos~\cite[Theorem 11.1]{czgkunos} shows that 
it suffices to prove that $\dfan3{\delta,d_1,d_2,1}$ cannot be constructed  in general from its parameters $p,d_1,d_2,1$. 
Hence, we can treat $p$, $d_1$ and $d_2$ as algebraically independent numbers over $\qq$, whereby we can consider them indeterminates $w$, $y$, and $z$, respectively. 
Note that although the rest of this proof is conceptually easy and it is hopefully readable without computers, the real verification has been done by  computer algebra; reference will be given later. 

We denote the half-angles corresponding to the sides at distances $y:=d_1$, $z:=d_2$ and 1  of our 3-fan 
by $\alpha:=\alpha_1$, $\beta:=\alpha_2$ and $\gamma:=\alpha_3$, respectively. For convenience, we let $\delta'=\delta/2$. Then we have that $\alpha+\beta=\delta'-\gamma$, whereby
\begin{align*}
0&=h_1:=\cos^2(\alpha+\beta)-\cos^2(\delta'-\gamma)\cr
 &=(\cos\alpha\cos\beta -\sin\alpha\sin\beta)^2 -
  (\cos\delta'\cos\gamma +\sin\delta'\sin\gamma)^2\cr
 & =\cos^2\alpha\cos^2\beta +\sin^2\alpha\sin^2\beta -
 \cos^{2}\delta'\cos^{2}\gamma -\sin^2\delta'\sin^2\gamma - s_1,
\end{align*}
where $s_1=2\cos\alpha\cos\beta\sin\alpha\sin\beta + 2\cos\delta'\cos\gamma \sin\delta'\sin\gamma$. 
Our purpose is to get rid of the sines in $s_1$ that are raised to odd exponents. Note that neither $h_1+s_1$, nor its square has sines with odd exponents. 
Since $h_1=0$, so is 
\begin{align*}
h_2&:=(h_1+s_1)^2 - s_1^2\cr
&\phantom:=(h_1+s_1)^2 - 4\cos^2\alpha\cos^2\beta\sin^2\alpha\sin^2\beta - 4 \cos^2\delta'\cos^2\gamma \sin^2\delta'\sin^2\gamma - s_2,
\end{align*}
where $s_2=8\cos\alpha\cos\beta\sin\alpha\sin\beta \cos\delta'\cos\gamma \sin\delta'\sin\gamma$. Clearly, neither $h_2+s_2$, nor its square has sines with odd exponents. Finally, since $h_2=0$, so is
\[h_3:=(h_2+s_2)^2 - s_2^2.
\]
Now we are in the position that after expanding $h_3$,
all the sines are raised to even exponents. Hence, after substituting $1-\cos^2\alpha$, \dots, $1-\cos^2\delta'$ for $\sin^2\alpha$, \dots, $\sin^2\delta'$ in $h_3$, we obtain a quaternary polynomial $h_4$ over $\zz$ such that 
\[0=h_3=h_4(\cos\alpha,\cos\delta,\cos\gamma,\cos\delta').
\]
As we did this before, see Figure~\ref{fig1} with different notation, $\cos\alpha=d_1u=yu$, $\cos\beta=d_2u=zu$, and
$\cos\gamma=d_3u=u$, while $\cos\delta'=p=w$. Substituting these equalities into $h_4$, we obtain a nonzero quaternary polynomial $h_5$ over $\zz$ such that 
\begin{equation*}
0=h_4(\cos\alpha,\cos\delta,\cos\gamma,\cos\delta')
=h_5(y,z,w,u).
\end{equation*}
Substituting $x$ for $u^2$ in $h_5$, 
we obtain a polynomial 
\begin{equation}
\begin{aligned}
h_{y,z,w}(x)&=h(y,z,w,x) = 16 y^4z^4\cdot x^6 \cr
&+ \bigl(-16  y^2 z^6 p^2-16  y^4 z^4-16  y^6 z^2 p^2-16  y^4 z^2 -16  y^2 z^4\cr
&\phantom{+ \bigl(\kern 1pt}  -16  y^2 z^2 p^2+8  y^2 z^2+8  y^2 z^6+8  y^6 z^2\bigr) \cdot x^5 +\dots+w^8
\end{aligned}
\label{eqtxtthePolynom}
\end{equation}
over $\zz$ such that $u^2$ is a root of this polynomial 
and, as the leading coefficient indicates,  $\degx x {h({y,z,w,x})}=6$. 
For the rest of the coefficients, the reader can but need not see the
Maple worksheet to be mentioned in the proof of Theorem~ \ref{thmvltcorolyclS}\eqref{thmvltcorolyclSe}.
Since the polynomial in \eqref{eqtxtthePolynom} is too long to be fully presented here,  we display 
\begin{align*}
h(2,3,2,x)&=
20736x^6 -225792x^5 \cr
&\phantom{=} +453376x^4-180224x^3+37632x^2-3584x+256.
\end{align*}
Note that  $\degx x {h({2,3,2,x})}= \degx x {h({y,z,w,x})}$. Thus,  if $h(y,z,w,x)$ was reducible,
then so would $h({2,3,2,x})$. With the help of computer algebra, we obtain that  $h(2,3,2,x)$ is irreducible, whence so is $h(y,z,w,x)$.
Furthermore, the degree $\degx x {h(y,z,w,x)}=6$ is not a power of 2. Thus, a reference to Proposition~\ref{propwhncStr} completes the proof of Proposition~\ref{propqwshrmd}.
\end{proof}

Next, we outline another approach, which does not need computer algebra but it is conceptually harder and less detailed.

\begin{proof}[Second proof of Proposition~\textup{\ref{propqwshrmd}}]
By the Rational Parameter Theorem of Cz\'edli and Kunos~\cite{czgkunos}, it suffices to deal with the second part of  Proposition~\ref{propqwshrmd}. 
Let $\mathbb T$ denote the set of \emph{transcendental real numbers}. By \cite[Proposition 1.3]{czgkunos}, {which was taken from Cz\'edli and Szendrei~\cite{czgsza},}
there exist $d_1',d_2',d_3'\in \mathbb N$ such that 
$\dfan3{2\pi, d_1',d_2',d_3'}$ exists but it is not constructible. Clearly, with $d_1=d_1'/d_3'$, $d_2=d_2'/d_3'$, and $d_3=1=d_3'/d_3'$, the same holds for
$\dfan3{2\pi, d_1,d_2,d_3}$. By continuity,
$-1=\cos(2\pi/2)$ has  a small right neighborhood $U=(-1,-1+\varepsilon)$ such that 
 $\dfan3{2\cdot\arccos p, d_1,d_2,d_3}$ exists for every $p\in U\cap\mathbb T$. We can assume that the rational numbers $d_1$, $d_2$, and $d_3$ serve only as information; the task is to construct the $3$-fan $\dfan3{2\cdot\arccos p, d_1,d_2,d_3}$ from $p$. 
Up to isomorphism (over $\qq$), the field $\qqp$ and the constructibility problem does not depend on the choice of $p\in U\cap\mathbb T$.  Hence, either the 3-fan is non-constructible for every  $p\in U\cap\mathbb T$, or it is constructible for every  $p\in U\cap\mathbb T$. For the sake of contradiction, suppose that the second alternative holds. Then it follows by the Limit Theorem, which is Cz\'edli and Kunos~\cite[Theorem 9.1]{czgkunos}, that $\dfan3{2\pi, d_1,d_2,d_3}$ is also constructible, which contradicts the choice of $\tuple{d_1,d_2,d_3}$. Thus, the first alternative holds, and it implies  Proposition~\ref{propqwshrmd}.
\end{proof}

\begin{proof}[Proof of Theorem~\textup{\ref{thmmain}}]
Apply Propositions~\ref{propevennddd}, \ref{propositionTcDDgNgl}, \ref{propoveoDdDdd}, and  \ref{propqwshrmd}.
\end{proof}

\begin{proof}[Proof of Theorem~\textup{\ref{thmvltcorolyclS}}]
Keeping Remark~\ref{remarkdhgzVbMq} in mind, observe that  \ref{thmvltcorolyclS}\eqref{thmvltcorolyclSa} has already been proved; see   Cz\'edli and Kunos~\cite[Theorem 1.2 and Proposition 1.3]{czgkunos} together with Cz\'edli~\cite[Corollary 1.3]{czgthalesian}. Similarly, \ref{thmvltcorolyclS}\eqref{thmvltcorolyclSb} follows from \cite[Theorem 1.1]{czgthalesian}.
Propositions~\ref{propevennddd}, \ref{propositionTcDDgNgl}, \ref{propoveoDdDdd},   and \ref{propqwshrmd} imply 
\ref{thmvltcorolyclS}\eqref{thmvltcorolyclSc}.
The first inclusion in \ref{thmvltcorolyclS}\eqref{thmvltcorolyclSd} follows from Propositions~\ref{propevennddd} and \ref{propoveoDdDdd}, while the second one comes from 
Proposition~\ref{propositionTcDDgNgl}. 
Next, \ref{thmvltcorolyclS}\eqref{thmvltcorolyclSe} is a consequence of \ref{thmvltcorolyclS}\eqref{thmvltcorolyclSa} and \ref{thmvltcorolyclS}\eqref{thmvltcorolyclSd}. Finally, our proof of  \ref{thmvltcorolyclS}\eqref{thmvltcorolyclSf} needs the brute force of a computer; an appropriate program (called Maple worksheet) for Maple V, version 5, is available from the the authors' homepages. For every $w_0$ in the set $A_{1000}^{(1)}\cup A_{100}^{(2)}$, the program has to verify that  the polynomials  $h(y,z,w_0,x)$ and $h(y,z,-w_0,x)$,  see \eqref{eqtxtthePolynom}, are irreducible in $\zz[y,z,x]$. 
The program had to verify  $1{,}675{,}500$ polynomials; this took 42 minutes  with the help of a personal computer with IntelCore i5-4440 CPU, 3.10 GHz, and 8.00 GB RAM. (Note that the $1{,}675{,}500$ polynomials are not pairwise distinct; for example, each of the fractions $1/2$, $2/4$, $3/6$, \dots, and $500/1000$ gives the same $w_0$ and the same   $h(y,z,w_0,x)$.)
As the leading coefficient in \eqref{eqtxtthePolynom} shows, all these polynomials are of degree 6 with respect to $x$, independently from $w_0$. Thus, their irreducibility proves \ref{thmvltcorolyclS}\eqref{thmvltcorolyclSf}.
\end{proof}

\begin{proof}[Proof of Proposition~\textup{\ref{propsLengths}}]
The last sentence of Remark~\ref{remarkZhF} shows that 
the 3-fan
$\sfan 3{\delta, a_1,a_2,a_3}$ cannot be constructed from its central angle an side lengths in general. The same conclusion can be derived from the non-constructibility of the regular nonagon if we choose $\delta=2\pi/3$. 
Hence, the Limit Theorem from Cz\'edli and Kunos~\cite{czgkunos} implies that $\sfan n{\delta, a_1,\dots,a_n}$ is non-constructible for every $n\geq 3$. Note that the Limit Theorem applies also to a fixed central angle, whereby for every $n\geq 3$, say, $\sfan n{\pi, a_1,\dots,a_n}$ and  $\sfan n{2\pi/3, a_1,\dots,a_n}$ are non-constructible from their side lengths. The 1-fan is obviously constructible.

We are left with the case $n=2$, that is, with the constructibility of  $\sfan 2{\delta, a_1,a_2}$.
By changing the unit if necessary, we can assume that $a_1=1$.  With  $u:=1/(2r)$,  Figure~\ref{fig1} gives that $\sin(\alpha_1)= u$ and $\sin(\alpha_2)=a_2u$.
Using that $\delta':=\delta/2=\alpha_1+\alpha_2$ and denoting $\cos(\delta')$ by $p$, the binary trigonometric addition formula for cosine gives that 
$p=\cos(\alpha_1+\alpha_2)=\sqrt{1-u^2}\cdot \sqrt{1-a_2^2u^2} - u\cdot a_2\cdot u$. Substituting $x$ for $u^2$, rearranging, squaring, and rearranging again we conclude that $u^2$ is a root of the polynomial
\begin{equation}
(a_2^2 + 2pa_2 +1)x+p^2-1.
\label{eqdPshBjRb}
\end{equation}
Since $p>-1$ and $a_2$ is positive, $a_2^2 + 2pa_2 +1>
a_2^2 + 2\cdot(-1)\cdot a_2 +1 = (a_2-1)^2\geq 0$.
Hence, the coefficient of $x$ above in nonzero and 
\eqref{eqdPshBjRb} is a polynomial of degree 1. Since $u^2$ is a root of this polynomial, $u^2$ and $\sfan 2{\delta, a_1,a_2}$ are constructible.
\end{proof}

Finally, we note that although we could use the  Limit Theorem from \cite{czgkunos} to give a short approach to the constructibility of $\sfan n{\delta, a_1,\dots,a_n}$ from its central angle and side lengths and we could  apply this theorem even for the central angle in the Second proof of Proposition~\ref{propqwshrmd}, the Limit Theorem is not applicable for the central distances of our $n$-fans. This is one of the reasons that, as we know from Theorem~\ref{thmvltcorolyclS}\eqref{thmvltcorolyclSa}, there is a gap in $\noncn(2\pi)$.

\end{document}